\newtheorem{theorem}{Theorem}
\newtheorem{prop}{Proposition}
\newtheorem{corr}{Corollary}
\newtheorem{lemma}{Lemma}
\newtheorem{definition}{Definition}
\newtheorem{note}{Note}
\newtheorem{problem}{Problem}
\def\Int{\operatorname{Int}}
\def\max{\operatorname{max}}
\def\diam{\operatorname{diam}}
\def\Sl{\operatorname{Slice}}
\DeclareMathOperator*{\Argmin}{Arg\,min}
\def\eps{\varepsilon}
\title{On the chromatic numbers of 3-dimensional slices}
\author{D. D. Cherkashin, A. J. Kanel-Belov\thanks{The work of A. J. Kanel-Belov is supported by grant RNF 22-11-00177.}, G. A. Strukov, V. A. Voronov}
\begin{document}

\maketitle

\begin{abstract}
We prove that for an arbitrary $\varepsilon > 0$ holds 
\[
\chi (\mathbb{R}^3 \times [0,\varepsilon]^6) \geq 10,
\]
where $\chi(M)$ stands for the chromatic number of an (infinite) graph with the vertex set $M$ and the edge set consists of pairs of points at the distance 1 apart.
\end{abstract}

\section{Introduction}

We study colorings of a set $\Sl(n, k, \varepsilon)=\mathbb{R}^n \times [0,\varepsilon]^{k}$ 
in a finite number of colors with the forbidden distance 1 between monochromatic points; further such sets are called \textit{slices}. 
Slightly abusing the notation we say that $n$ is the \textit{dimension} of a slice.

Define graph $G(n,k,\varepsilon)$, which vertices are the point of $\Sl(n, k, \varepsilon)$ and edges connect points at the Euclidean distance 1 apart. Put
\[
\chi [\Sl(n, k, \varepsilon)] := \chi[G(n,k,\varepsilon)],
\]
where $\chi(H)$ is the chromatic number of a graph $H$.
Obviously for every positive $\varepsilon$ one has 
\[
\chi(\mathbb{R}^n) \leq \chi[\Sl(n,k,\varepsilon)] \leq  \chi(\mathbb{R}^{n+k}).
\]
Since $\chi(\mathbb{R}^n) = (3+o(1))^n$ (see~\cite{LR}), the chromatic number of a slice is finite.
So by the de Bruijn--Erd{\H o}s theorem it is achieved on a finite subgraph.

\subsection{Nelson--Hadwiger problem and its planar generalizations}

In this notation the classical Nelson--Hadwiger problem is to determine $\chi[\Sl(2,0,0)]$, but as usual we write $\chi(\mathbb{R}^2)$ for this quantity. The best known bounds up to the date are
\[
5 \leq \chi(\mathbb{R}^2) \leq 7.
\]
The upper bound is a classical coloring of a regular hexagon tiling due to Isbell.
The lower bound were obtained by de~Grey~\cite{de2018chromatic} in 2018, breaking a 70 year-old record (another constructions are contained in~\cite{heule2018computing,exoo2018chromatic,voronov2021constructing,parts2020chromatic,parts2020graph}).

The study of slice colorings started at~\cite{kanel2018chromatic} with the following main result.
\begin{theorem}
\label{mainold}
For every positive $\varepsilon$ holds
\[6 \leq \chi[\Sl(2,2,\varepsilon)].\]   
\end{theorem}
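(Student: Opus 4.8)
The plan is to adapt the known lower-bound construction for $\chi(\mathbb{R}^2)\ge 5$ (the de Grey graph, or one of its descendants) and show that it survives being embedded into a thin slab $\Sl(2,2,\varepsilon)=\mathbb{R}^2\times[0,\varepsilon]^2$, while additionally exploiting the two extra thin coordinate directions to force one more color. The starting observation is that any finite unit-distance graph in $\mathbb{R}^2$ with chromatic number $5$ is already embedded in $\Sl(2,2,\varepsilon)$ (just put the last two coordinates equal to $0$), so $\chi[\Sl(2,2,\varepsilon)]\ge 5$ for free; the content is the improvement from $5$ to $6$, which must use the fact that we may now perturb points slightly out of the base plane.

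**First I would** set up a "spindle"-type amplification: take a finite $5$-chromatic unit-distance graph $H\subset\mathbb{R}^2$, and observe that in $\Sl(2,2,\varepsilon)$ we have a continuous family of congruent copies of $H$ obtained by rotating in one of the new coordinate planes (say the $(x_1,x_3)$-plane) through a small angle $\theta$ with $|\theta|=O(\sqrt{\varepsilon})$, together with translations keeping everything inside the slab. The key geometric fact is that two points $p,q$ at planar distance slightly less than $1$ can be joined by a unit segment using a tiny excursion into the $[0,\varepsilon]^2$ directions, so the slice contains unit-distance graphs that are genuinely impossible in the plane. I would then build a graph on several rotated/translated copies of $H$ glued along shared vertices (in the style of the Moser spindle or the de Grey "$H$-copies joined at a vertex" argument): in any proper coloring, each copy forces its distinguished vertices to avoid being monochromatic with certain partners, and a pigeonhole over the $\le 6$ hypothetical colors on a carefully chosen finite configuration of such copies yields a contradiction if only $6$ colors were... wait — rather, the goal is exactly to force $6$, so the configuration should make $5$ colors impossible.

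**The hard part will be** the quantitative control of the embedding: one must verify that all the prescribed unit distances in the assembled graph can be realized simultaneously inside $\mathbb{R}^2\times[0,\varepsilon]^6$ — no, inside $[0,\varepsilon]^2$ for the $k=2$ statement — for \emph{every} $\varepsilon>0$, which means the construction cannot use a fixed amount of "room" in the thin directions but must scale its out-of-plane displacements to $0$ as $\varepsilon\to 0$. Concretely, if a sub-configuration needs $m$ points at mutual planar distances $d_{ij}<1$ to be completed to unit distances, the needed third/fourth coordinates are on the order of $\sqrt{1-d_{ij}^2}$, so one must choose the planar gaps $1-d_{ij}$ themselves to be $O(\varepsilon^2)$; checking that a robust $6$-forcing combinatorial configuration persists under such a degenerate limit (equivalently, that a suitable "infinitesimally perturbed" planar unit-distance graph is $6$-chromatic) is the crux. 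I would handle this by working with an explicit finite graph — e.g. a union of many rotated copies of a de Grey-type graph sharing a common vertex — reducing the problem to a finite linear-algebra / SAT-style check that remains valid for all sufficiently small $\varepsilon$ by continuity, and then noting that the bound for small $\varepsilon$ implies it for all $\varepsilon>0$ since $\Sl(2,2,\varepsilon')\supseteq$ a scaled copy of the relevant graph for $\varepsilon'\ge\varepsilon$. The remaining steps — verifying edges, counting, and the coloring contradiction — are then routine finite checks rather than genuine obstacles.
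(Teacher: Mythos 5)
Your proposal takes a route fundamentally different from the paper's, and it has a genuine gap at its center. The paper (via~\cite{kanel2018chromatic} and as sketched here for the three-dimensional case in Section~\ref{pr2}) argues topologically rather than through finite unit-distance graphs: it first locates a point of \emph{chromaticity} $3$ using a Knaster--Kuratowski--Mazurkiewicz covering argument (Theorem~\ref{point_c4} with $n=2$, or Theorem~10 of~\cite{kanel2018chromatic}); this yields three pairwise differently colored points $v_1, v_2, v_3$ that can be taken arbitrarily close together and nearly perpendicular to the base plane, so that the attached circle $S(v_1,v_2,v_3;1)$ has radius close to $1$ and a thin neighbourhood of it lies entirely inside $\Sl(2,2,\varepsilon)$. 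A planar analogue of Lemma~\ref{bydlo} then shows that this annular region already requires three further colors disjoint from the colors of $v_1,v_2,v_3$, giving $3+3=6$. Nothing in that argument exhibits a single finite graph; the six colors are forced by a compactness plus odd-cycle argument on a circular region.

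Your plan, by contrast, is a finite unit-distance amplification of a de Grey--type $5$-chromatic planar graph, hoping that the two extra thin coordinates let you close some sub-unit planar gap and thereby force a sixth color. The central difficulty, which you correctly flag, is that as $\varepsilon\to 0$ the out-of-plane displacements must go to zero, so the planar gaps being closed must degenerate like $1-d_{ij}=O(\varepsilon^2)$, and one needs a concrete finite configuration whose $6$-chromaticity is \emph{robust under this degeneration for every $\varepsilon>0$}. You call the remaining work ``routine finite checks,'' but that is where the entire content lies: you never exhibit the configuration, and it is not known a priori that one exists. (The problem is essentially equivalent to the Currie--Eggleton-type result $\chi_\varepsilon(\mathbb{R}^2)\ge 6$, which in the paper is deduced \emph{from} Theorem~\ref{mainold}, not the other way around; your route would need to reprove that result directly via an explicit almost-unit-distance graph with all perturbed distances on the correct side of $1$, i.e. in $[\sqrt{1-2\varepsilon^2},\,1]$, not in a symmetric band.) As written, the proposal is a research plan with its key step unverified, not a proof. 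It also would not generalize to yield the $n+1$ extra colors the paper needs for the three-dimensional result, whereas the topological chromaticity argument does.

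Minor points: you briefly write $\mathbb{R}^2\times[0,\varepsilon]^6$ before correcting to $[0,\varepsilon]^2$, and the pigeonhole paragraph is internally confused about whether you are assuming $5$ or $6$ colors; when tightening the argument you should fix a hypothetical proper $5$-coloring from the start and derive a contradiction.
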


Theorem~\ref{mainold} is a strengthening of the result $\chi_{\eps}(\mathbb{R}^2) \geq 6$ (Currie--Eggleton~\cite{currie2015chromatic}), where $\chi_{\eps}$ stands for the minimal number of colors, for which there is a coloring of plane without a pair of monochromatic points at the distance in the range $[1, 1 +\eps]$. 
Exoo~\cite{exoo2005varepsilon} conjectured that for every $\eps > 0$ holds
$\chi_{\eps}(\mathbb{R}^2) \geq 7$. Recently Voronov~\cite{voronov2023chromatic} proved this conjecture.

On the other hand Isbell' coloring implies inequality 
\[
\chi_{\eps}(\mathbb{R}^2) \leq 7
\]
for $\eps < 0.13\dots$.
As a corollary, for every $k$ there is $\varepsilon_k > 0$ such that for every positive $\varepsilon < \varepsilon_k$ holds
\[
\chi[\Sl(2,k,\varepsilon)] \leq 7.
\]

\paragraph{Structure of the paper.} The results on real three-dimensional and rational two-dimensional slices are 
stated in Subsections~\ref{resR} and~\ref{resQ}, respectively. 
Section~\ref{tech} contains some auxiliary lemmas.
The proofs are contained in Section~\ref{pr1},~\ref{pr2} and~\ref{pr3}. 
Section~\ref{diss} is devoted to open questions.

\subsection{The chromatic numbers of real 3-dimensional slices}
\label{resR}

First recall the best bounds on $\chi(\mathbb{R}^3)$.
The best known lower bound $\chi (\mathbb{R}^3) \geq 6$ is due to Nechushtan~\cite{Nech}.
The upper bound $\chi (\mathbb{R}^3) \leq 15$ is obtained independently by Coulson~\cite{Coul} and 
by Radoi{\v{c}}i{\'c} and T{\'o}th~\cite{radoivcic2003note}.

The main result of this paper is the following theorem.

\begin{theorem} 
There is $\varepsilon_0 > 0$, such that for every positive $\varepsilon < \varepsilon_0$ holds
\[
10 \leq \chi[\Sl(3,6,\varepsilon)] \leq 15.
\]
\label{mainres}
\end{theorem}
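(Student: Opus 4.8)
The plan is to prove the two inequalities separately. The upper bound $\chi[\Sl(3,6,\varepsilon)] \le 15$ should be immediate from the monotonicity chain stated in the introduction: $\chi[\Sl(3,6,\varepsilon)] \le \chi(\mathbb{R}^{3+6}) = \chi(\mathbb{R}^9)$ is too weak, so instead I would observe that for sufficiently small $\varepsilon$ a good $15$-coloring of $\mathbb{R}^3$ (the Coulson / Radoi\v{c}i\'c--T\'oth coloring) can be ``thickened'': since that coloring is built from finitely many convex cells (tiles) at distance bounded away from $1$ across same-color cells, lifting each cell to the slab $C \times [0,\varepsilon]^6$ still forbids distance $1$ once $\varepsilon$ is small enough. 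Equivalently, one cites the general principle that for a polytopal coloring with ``color slack'' $\delta>0$ one has $\chi[\Sl(n,k,\varepsilon)] = \chi(\mathbb{R}^n)$ for $\varepsilon < \varepsilon(\delta,k)$, so $\chi[\Sl(3,6,\varepsilon)]\le 15$. I expect this half to be a short paragraph, probably deferred to the auxiliary lemmas of Section~\ref{tech}.

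The substance is the lower bound $\chi[\Sl(3,6,\varepsilon)] \ge 10$. The strategy I would follow mirrors the passage from $\chi(\mathbb{R}^2)\ge 5$-type results to their $\varepsilon$-robust slice versions (as in Theorem~\ref{mainold}): exhibit an explicit finite graph $H$ that embeds in $\Sl(3,6,\varepsilon)$ for all small $\varepsilon$ and has chromatic number $\ge 10$. The key realization is that the six extra ``thin'' coordinates give enough room to perturb: a unit-distance graph in $\mathbb{R}^9$ whose vertices, after projecting to the first three coordinates, all land in a bounded region and whose last six coordinates lie in $[0,\varepsilon]^6$, will have its pairwise distances only slightly changed from the $\mathbb{R}^3$ picture. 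So I would (i) take or design a unit-distance graph $G_0 \subset \mathbb{R}^3$ with a strong coloring-rigidity property — not just $\chi(G_0)$ large, but that in every proper coloring with few colors some prescribed pair of points is forced to a controlled relation — and (ii) use the six slack coordinates to ``repair'' the distances: replace an edge $\{u,v\}$ of $G_0$ whose true $\mathbb{R}^3$-distance is $1+O(\varepsilon^2)$ (arising after small perturbations needed to make the construction work) by adjusting the thin coordinates so the $\mathbb{R}^9$-distance is exactly $1$. A cleaner route, and the one I would actually pursue, is to build the whole graph already in $\mathbb{R}^3 \times [0,\varepsilon]^6$ from the start: take several rotated/translated copies of a base configuration, using independent thin coordinates for each copy so that inter-copy distances are governed by the $\mathbb{R}^3$ part up to $O(\varepsilon)$, and assemble a ``spindle-of-spindles'' amalgamation whose fractional/linear-programming chromatic lower bound is $\ge 10$.

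Concretely, the key steps in order: first, in Section~\ref{tech}, prove a perturbation lemma: if $\{p_i\}$ are finitely many points of $\mathbb{R}^3$ with all pairwise distances in a finite set avoiding an interval around $1$, then for small $\varepsilon$ one can choose $q_i \in [0,\varepsilon]^6$ so that $(p_i,q_i)$ realize any prescribed unit-distance graph on those pairs whose $\mathbb{R}^3$-distance is in $(1-c\varepsilon,\,1)$ while keeping non-edges non-unit. Second, identify (likely by computer search, citing one of \cite{heule2018computing,exoo2018chromatic,parts2020graph} for methodology) a finite unit-distance graph in $\mathbb{R}^3$ with $\chi=10$ that is ``$\varepsilon$-flexible'', i.e.\ robust to the $O(\varepsilon)$ distortions; the natural candidates are the Coulson-type or Nechushtan-type configurations augmented with Moser-spindle amalgamations, or a direct $9$-dimensional construction restricted to a slice. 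Third, verify $\chi \ge 10$ of the finite graph — this is the place where a SAT-solver or an integer-programming certificate enters, and it is the step I expect to be the main obstacle, both because finding a $3$-dimensional (rather than full $9$-dimensional) unit-distance graph of chromatic number $10$ is itself hard, and because one must simultaneously control the $\varepsilon$-perturbation so the graph genuinely sits inside the slice. Finally, combine: for $\varepsilon < \varepsilon_0$ the embedding from steps one and two places this graph inside $\Sl(3,6,\varepsilon)$, so $\chi[\Sl(3,6,\varepsilon)] \ge \chi(H) \ge 10$, and together with the upper bound paragraph this proves Theorem~\ref{mainres}.
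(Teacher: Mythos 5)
Your treatment of the upper bound is fine and essentially matches the paper: one thickens the Coulson / Radoi\v{c}i\'c--T\'oth $15$-coloring of $\mathbb{R}^3$, which has positive slack, to the slab $\mathbb{R}^3 \times [0,\varepsilon]^6$; the paper disposes of this in one sentence.

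For the lower bound, however, there is a genuine gap, and the approach you sketch is not the one the paper uses. You propose to exhibit an explicit finite unit-distance graph $H \subset \Sl(3,6,\varepsilon)$ with $\chi(H)\ge 10$, found by design or computer search, and you yourself flag this as ``the main obstacle'' and ``the step I expect to be the main obstacle.'' That is exactly right: no such explicit graph is produced in your proposal, and none is known. Observe that because the six thin coordinates lie in $[0,\varepsilon]^6$, they can shift any pairwise distance by only $O(\varepsilon^2)$; so any unit-distance graph in the slice projects to a graph in $\mathbb{R}^3$ whose edge lengths lie in $[\sqrt{1-6\varepsilon^2},\,1]$. Hence an explicit certificate for your Step (iii) would amount to an explicit finite graph witnessing $\chi_{\varepsilon}(\mathbb{R}^3)\ge 10$ for all small $\varepsilon$ --- a much stronger deliverable than what is currently known constructively (the best explicit lower bound for $\chi(\mathbb{R}^3)$ is $6$, via Nechushtan). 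Your outline stops exactly where the work would start.

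The paper avoids this entirely by a non-constructive, topological continuity argument. The skeleton: (i) Theorem~\ref{point_c4}, proved via the Knaster--Kuratowski--Mazurkiewicz lemma, guarantees that in any proper coloring, a regular simplex of a fixed size in $\mathbb{R}^3$ (lifted to fibers $T\times\{u\}$ over a small $5$-sphere in the thin directions) contains a point of \emph{chromaticity} $\ge 4$; a pigeonhole over a $\delta$-net of the thin $5$-sphere together with the Cayley--Menger stability Lemmas~\ref{cm_det}--\ref{stability_angle} then produces four points $v_1,\dots,v_4$ of distinct colors whose attached sphere $S(v_1,\dots,v_4;1)$ has a $2$-equator inside the slice and radius close to $1$. (ii) On that attached sphere, an auxiliary ``dominating color'' coloring built from small pentagons, combined with Lemma~\ref{c3}, yields three more points $v_5,v_6,v_7$ of three new colors whose joint attached sphere $S(v_1,\dots,v_7;1)$ still has a $2$-equator in the slice, of radius near $\sqrt{3}/2 > \sqrt{1/2}$. (iii) Lemma~\ref{bydlo} (odd cycles inside an $\varepsilon$-neighbourhood of a long enough curve on a sphere of radius $>\sqrt{1/2}$) forces three further colors on this intersection. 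Totaling $4+3+3=10$. An explicit finite subgraph is only known to exist a posteriori via de~Bruijn--Erd\H{o}s. In short: your plan is a different (and presently infeasible) route; the paper's proof rests on chromaticity of points, KKM, spherical odd-cycle lemmas, and Cayley--Menger stability rather than on any explicit graph.
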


The upper bound immediately follows from the coloring of $\chi (\mathbb{R}^3)$ from~\cite{Coul,radoivcic2003note}, similarly to 2-dimensional case.
The lower bound requires somewhat more complicated arguments than in two dimensions. 

The following theorem is a quantitative strengthening of Theorem 10 from~\cite{kanel2018chromatic} and is of an independent interest.

\begin{theorem}
Let $T \subset \mathbb{R}^n$ be a regular simplex with the edge length $a = \sqrt{2n(n+1)}$. 
Then every proper coloring of $\mathbb{R}^n$ in a finite number of colors contains a point from $T$ belonging to the closures of at least $n+1$ colors. 
\label{point_c4}
\end{theorem}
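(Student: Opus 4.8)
The plan is to prove the contrapositive by induction on $n$: fix a proper colouring $c$ of $\mathbb{R}^n$ in finitely many colours $A_1,\dots,A_m$ and assume every point of $T$ has chromaticity at most $n$ (the chromaticity of $x$ being the number of the $A_i$ whose closure contains $x$, equivalently the number of colours occurring in every neighbourhood of $x$); I must derive a contradiction. The arithmetic to keep in mind is that for edge length $a=\sqrt{2n(n+1)}$ the simplex $T$ has inradius exactly $1$, circumradius $n$, and height $n+1$; so the incenter $O$ sits at distance exactly $1$ from each facet, and for every $x\in T$ the distances from $x$ to the $n+1$ facet-hyperplanes sum to the height $n+1$. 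The base case $n=1$ is clear: $T$ is a segment of length $2$, and if every point had chromaticity $1$ each colour class would be relatively open in $T$, hence $T$ monochromatic by connectedness, contradicting that $T$ contains two points at distance $1$.

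For the inductive step I would first harvest chromaticity $n$ from a facet. A facet $F$ of $T$ is a regular $(n-1)$-simplex of edge $\sqrt{2n(n+1)}>\sqrt{2(n-1)n}$, so it contains a concentric regular $(n-1)$-simplex of edge exactly $\sqrt{2(n-1)n}$; restricting $c$ to the affine hull of $F$ gives a proper finite colouring of an $(n-1)$-dimensional space, and the inductive hypothesis supplies a point $q\in F\subseteq T$ of chromaticity $\ge n$. Under the standing assumption its chromaticity is exactly $n$, and the $n$ colours accumulating at $q$ already accumulate within the hyperplane of $F$. The catch is that this recursion always stops one colour short of $n+1$, so the extra dimension --- and the normalisation inradius $=1$ --- has to be brought in to close the gap.

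To gain the last colour I would set up a covering argument. The pointwise hypothesis gives every $x\in T$ an open ball $B(x,r_x)$ with $r_x<1$ meeting at most $n$ colours, and finitely many of these cover $T$ by compactness. A ball of radius $<1$ centred in $T$ cannot meet all $n+1$ facets: its centre would lie within that radius of every facet-hyperplane, pushing the sum of those distances strictly below $n+1$, against the identity above. Hence each covering ball misses a facet; grouping the balls by a facet each misses and applying a Lebesgue--Knaster--Kuratowski--Mazurkiewicz-type lemma for the simplex (which I would record as an auxiliary statement) produces a point $p\in T$ that lies in at least $n+1$ of the covering balls. What remains, and what I expect to be the main obstacle, is to convert ``$p$ lies in $n+1$ balls'' into ``$n+1$ colours accumulate at $p$''. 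The honest difficulty is that an arbitrary (non-measurable) colouring is uncontrolled: a colour class can have infinitely many components inside $T$ whose closures merge into a connected set of diameter $\ge 1$, so one cannot run the covering lemma directly on the colour classes --- one must use geometrically tame sets (small balls, or closures of individual components, which have diameter $<1$ since exact distance $1$ is forbidden and a connected monochromatic set of diameter $1$ would realise distance $1$) --- yet these tame sets carry no single colour, so the passage back to colours requires an extra idea, presumably the rigidity of the unit-distance graph (slide a unit-edge regular $(n+1)$-simplex, whose $n+1$ vertices always get distinct colours, into the configuration around $p$) pressed against the inradius-$1$ normalisation.

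Finally, the constant: $\sqrt{2n(n+1)}$ is precisely the edge length making $T$ have inradius $1$, which is what puts the incenter at distance exactly $1$ from each facet and makes the facet-distance sum equal the height $n+1$; this is the single place the exact value is used, and any larger edge length would work a fortiori.
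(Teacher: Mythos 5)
Your covering argument stalls exactly at the step you flag, and that step is the crux, not a technicality. Covering $T$ by finitely many open balls of radius $<1$, each meeting at most $n$ colours, and extracting by a KKM-type lemma a point $p$ lying in $n+1$ of them, gives no control on chromaticity: the $n+1$ balls could all see the same $n$-colour palette, so $p$ may still have chromaticity $\le n$. Your fallback of closures of connected colour components does not close the gap either --- a connected monochromatic component has diameter $\le 1$, but its closure is not monochromatic, and closures of two components (of the same or of different colours) can meet, so a KKM common point of such closures need not accumulate $n+1$ \emph{distinct} colours. The inductive preamble on facets is a detour: it produces $n$ colours accumulating somewhere on a facet hyperplane but supplies no mechanism forcing an $(n+1)$-st colour there, and it does not feed into the covering step. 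The appeal to ``rigidity of the unit-distance graph'' names the right phenomenon but is left without content.

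What the paper does, and what your outline lacks, is to run KKM on \emph{single-colour} closed pieces whose smallness is a \emph{theorem}, not an assumption. It regularizes the colour classes, $C^*_i:=\overline{\operatorname{Int}\,\overline{C_i}}$, splits them into connected components $D_\alpha$, and proves the decisive lemma (item (ii) of Section~\ref{pr1}): if $v$ lies in exactly $k\le n$ of the $C^*_i$, then for arbitrarily small $\mu$ the sphere $S(v;1-\mu)$ misses one of those $k$ sets, so the component of that set through $v$ is trapped strictly inside a unit ball. This is precisely where the unit-distance structure and the inradius-$1$ normalisation do real work: representatives of the $k$ colours are placed on $S(v;1-\mu)$, their unit spheres are intersected (the attached sphere / $\Argmin$ construction) to produce a point $w$ and a whole neighbourhood $B(w;\eta)$ that must carry a colour outside the original $k$, and letting $\mu,\eta\to 0$ drags this extra colour into every neighbourhood of $v$, contradicting chromaticity $k$. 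With that diameter control in hand one packages the components into a finite cover by closed single-colour sets each avoiding a facet, and KKM then genuinely delivers a point touching $n+1$ distinct colour closures. Without an analogue of this lemma --- some device that converts the distance-$1$ constraint into a diameter bound on single-colour pieces --- the KKM output cannot be upgraded to chromaticity $n+1$, and the proof does not go through.
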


\begin{corr}
For every positive $\varepsilon'$ holds
  \[
  \chi_{\varepsilon'}(\mathbb{R}^3) \geq 10.
  \]
\end{corr}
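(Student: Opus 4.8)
The plan is to deduce this corollary from Theorem~\ref{mainres} by the same projection (``lifting'') trick used earlier in the introduction to pass from $\eps$-colorings of the plane to colorings of thin slices; all of the real work then sits inside Theorem~\ref{mainres} (which in turn rests on Theorem~\ref{point_c4}).

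First I would record the trivial monotonicity: $\chi_{\eps'}(\mathbb{R}^3)$ is non-decreasing in $\eps'$, since a coloring avoiding monochromatic pairs at distances in $[1-\eps_2',1+\eps_2']$ also avoids them in the smaller range $[1-\eps_1',1+\eps_1']$ whenever $\eps_1'\le\eps_2'$. Hence it suffices to establish $\chi_{\eps'}(\mathbb{R}^3)\ge 10$ for all sufficiently small $\eps'>0$ (for $\eps'\ge 1$ the forbidden range contains $0$ and the quantity is infinite, so there is nothing to prove).

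So fix $\eps'\in(0,1)$ small, and choose $\eps>0$ with $\eps<\eps_0$ (the constant from Theorem~\ref{mainres}) and $6\eps^2<2\eps'-(\eps')^2$. Let $c\colon\mathbb{R}^3\ra\{1,\dots,N\}$ be an optimal coloring for $\chi_{\eps'}(\mathbb{R}^3)$, where $N:=\chi_{\eps'}(\mathbb{R}^3)$, and define $c'$ on $\Sl(3,6,\eps)=\mathbb{R}^3\times[0,\eps]^6$ by $c'(x,y):=c(x)$. If $(x,y),(x',y')\in\Sl(3,6,\eps)$ lie at Euclidean distance $1$, then $\norm{y-y'}^2\le 6\eps^2$, so
\[
\norm{x-x'}^2 = 1-\norm{y-y'}^2 \in [\,1-6\eps^2,\ 1\,],
\]
and by the choice of $\eps$ this gives $\norm{x-x'}^2>(1-\eps')^2$, i.e. $\norm{x-x'}\in(1-\eps',1]\subseteq[1-\eps',1+\eps']$. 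Since $c$ avoids this distance range, $c(x)\ne c(x')$, so $c'$ is a proper coloring of $G(3,6,\eps)$. Therefore $\chi[\Sl(3,6,\eps)]\le N$, and Theorem~\ref{mainres} yields $N\ge 10$; that is, $\chi_{\eps'}(\mathbb{R}^3)\ge 10$.

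I do not expect a genuine obstacle here: the statement is a soft consequence of Theorem~\ref{mainres}, the only points to watch being the order of quantifiers ($\eps$ is chosen after, and in terms of, $\eps'$) and the elementary distance bookkeeping above. If instead one wanted a proof not invoking the slice result, one could rerun the argument behind Theorem~\ref{mainres} with the six thin coordinates replaced by genuine $\eps$-wiggle room inside $\mathbb{R}^3$; this only loosens the constraints, and Theorem~\ref{point_c4} remains the crucial ingredient. The projection route above is shorter, so that is the one I would take.
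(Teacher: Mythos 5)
Your proof is correct and is essentially the same one-line argument the paper gives: the paper phrases it as projecting the finite unit-distance graph from the proof of Theorem~\ref{mainres} orthogonally onto $\mathbb{R}^3$ (noting its edge lengths land in $[\sqrt{1-6\varepsilon^2},\,1]$), whereas you phrase it dually as lifting an $\varepsilon'$-coloring of $\mathbb{R}^3$ to a proper coloring of $\Sl(3,6,\varepsilon)$; the distance bookkeeping and the choice of $\varepsilon$ in terms of $\varepsilon'$ are the same in both versions.
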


Indeed, the orthogonal projection of a unit distance graph from the proof of Theorem~\ref{mainres} for a fixed $\varepsilon$
 has distances between adjacent vertices in the range $[\sqrt{1-6\varepsilon^2}, 1]$. 

\subsection{The chromatic numbers of 2-dimensional rational slices}
\label{resQ}

Denote by $[0, \eps]_\mathbb{Q}$ the set of rational numbers from $[0, \eps]$.
In paper~\cite{kanel2018chromatic} it is shown that $\chi(\mathbb{Q} \times [0, \eps]_\mathbb{Q}^3)  = 3$.
Benda and Perles~\cite{BP} show that $\chi(\mathbb{Q}^4) = 4$. 
Thus the chromatic number of $\mathbb{Q}^2 \times [0, \eps]^2_\mathbb{Q}$ is at most 4.

\begin{prop}
\label{chiQ}
For every $\varepsilon > 0$ holds
\[
\chi (\mathbb{Q}^2 \times [0, \eps]^2_\mathbb{Q}) = 4.
\]
\end{prop}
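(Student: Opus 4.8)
The plan is to establish the lower bound $\chi(\mathbb{Q}^2 \times [0,\eps]^2_{\mathbb{Q}}) \geq 4$, since the matching upper bound is already noted in the text (it follows from $\chi(\mathbb{Q}^4) = 4$ of Benda–Perles, as $\mathbb{Q}^2 \times [0,\eps]^2_{\mathbb{Q}}$ embeds isometrically into $\mathbb{Q}^4$). So the whole task is to exhibit, inside the slice, a finite unit-distance subgraph that is not $3$-colorable, or else to give a direct parity/algebraic obstruction ruling out $3$-colorings.

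First I would recall the standard structure theory for unit-distance graphs over $\mathbb{Q}^n$: a pair of rational points at distance $1$ forces constraints on the $2$-adic valuations of the coordinate differences. The key classical fact (used by Benda–Perles, and in \cite{kanel2018chromatic} for the statement $\chi(\mathbb{Q}\times[0,\eps]^3_{\mathbb{Q}}) = 3$) is that in $\mathbb{Q}^3$ and $\mathbb{Q}^4$ the unit-distance graph decomposes nicely and one can colour by looking at coordinates mod small primes. I would then look for an explicit finite rational configuration in $\mathbb{Q}^2 \times \{0,\ldots\}^2$ realizing a non-$3$-chromatic unit-distance graph: the natural candidate is to take the Benda–Perles witness graph for $\chi(\mathbb{Q}^4) \geq 4$ and rescale/rotate it so that two of its four coordinates become small (bounded in absolute value by $\eps$) while staying rational. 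Concretely, one should find a rational orthogonal transformation of $\mathbb{Q}^4$ (or an explicit rational realization) after which the relevant finite graph lies in $\mathbb{Q}^2 \times [0,\eps]^2_{\mathbb{Q}}$; since $\eps$ is an arbitrary positive real, we are free to rescale the two "thin" directions by an arbitrarily small rational factor provided the graph's unit-distance relations are preserved — which they are if the rescaling is part of a genuine rotation keeping all distances equal to $1$.

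The main obstacle is exactly this realizability step: a finite unit-distance graph in $\mathbb{Q}^4$ does not automatically admit a realization with two coordinates uniformly small, because shrinking two coordinates forces the other two to carry almost all of each unit distance, and there may be rational obstructions (sums of two rational squares close to $1$ with controlled denominators) to doing this simultaneously for all edges. I would handle this by choosing the witness graph to be one whose edges come in only a few "directions" — e.g. a graph built from a single rational simplex and its translates, as in Theorem~\ref{point_c4} — so that one only needs a bounded number of rational rotations to compress, and then invoke density of rational rotations of $\mathbb{Q}^4$ near the identity (equivalently, density of rational points on the relevant quadric) to push the two chosen coordinates below $\eps$ while keeping all edge lengths exactly $1$ and all vertices rational. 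If a fully explicit construction is cleaner, I would instead write down coordinates directly: take a rational point set that contains a $\mathbb{Q}^2$-copy of, say, the Moser spindle analogue that fails to be $3$-colorable over $\mathbb{Q}^4$, with the two extra coordinates taking values in a short rational interval. Once such a finite non-$3$-colorable graph is exhibited inside the slice, $\chi \geq 4$ follows, and combined with the upper bound we get equality.
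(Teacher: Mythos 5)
Your high-level goal is correct (only the lower bound $\geq 4$ needs proof), and your second alternative --- write down an explicit ``Moser spindle analogue'' with two small coordinates --- is in spirit what the paper does. But both of your concrete suggestions have a real gap. Your main plan, to take the Benda--Perles witness graph in $\mathbb{Q}^4$ and compress two of its coordinates by a rational rotation, cannot work as stated: if the witness graph affinely spans $\mathbb{Q}^4$ (which it does, since $\chi(\mathbb{Q}^3)=2$ means any non-$3$-colorable rational unit-distance graph must be genuinely $4$-dimensional), then \emph{no} rotation places it inside a slab of thickness $\eps<$ its width in the two compressed directions --- rotations preserve width. Density of rational rotations near the identity is irrelevant here, because near-identity rotations compress nothing. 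The fact that $\chi(\mathbb{Q}^2)=2$ while $\chi(\mathbb{Q}^4)=4$ shows that the lower bound genuinely degrades as the slab shrinks, so one must build a witness graph directly in the slab, not import one from $\mathbb{Q}^4$.

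The paper's argument is exactly such a direct construction, and it contains number-theoretic content your sketch does not anticipate. The paper takes the classical rhombus/spindle gadget: four points $A=(0,0,0,0)$, $B=(q,\tfrac12,\alpha,\beta)$, $C=(q,-\tfrac12,\alpha,\beta)$, $D=(2q,0,0,0)$, all edges unit except $AD$, forcing $A$ and $D$ to share a color in a $3$-coloring. The constraint is $q^2+\tfrac14+\alpha^2+\beta^2=1$, and to keep $q$ rational with $\alpha,\beta\in[0,\eps]_{\mathbb{Q}}$ the paper solves the Pell-type equation $3b^2-a^2=2$ (via the recursion $(a,b)\mapsto(7a+12b,4a+7b)$), giving $q=a_n/(2b_n)$, $\alpha=\beta=1/(2b_n)$ with $b_n\to\infty$. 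This yields monochromaticity at horizontal displacements $a_n/b_n$; taking two consecutive solutions and a Bezout argument (the paper checks $\gcd(a_nb_{n+1},a_{n+1}b_n)=1$ via $a_nb_{n+1}-a_{n+1}b_n=-8$ and parity) produces an integer combination equal to $1$, hence two unit-distance points of the same color --- contradiction. This Pell-equation-plus-Bezout closing step is the crux of the proof and is entirely missing from your proposal; without it, the ``spindle analogue'' direction does not get off the ground in the rational setting.
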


\section{Notation and auxiliary lemmas}
\label{tech}

Here and after we focus on the following $\Sl(3,6,\varepsilon)\subset \mathbb{R}^9$. By $S_r^n(x)$ we denote a $n$-dimensional sphere of the radius $r$ and centered at $x$.

\begin{definition}
An \textit{attached sphere} of a simplex with vertices $\{v_i\}_{1\leq i \leq k}$, $3 \leq k \leq 4$ is a set of points at the distance 1 from each $v_i$:
\[
S(v_1, \dots , v_k; 1) := \bigcap \limits_i S(v_i; 1) \subset \mathbb{R}^{9}.
\]
Note that if the radius $r$ of a circumscribed $(k-2)$-dimensional sphere $v_1, \dots, v_k$ is smaller than 1, then
$S(v_1, \dots , v_k; 1)$ is a $(9-k)$-dimensional sphere with the radius $\sqrt{1-r^2}$.
\end{definition}

\begin{definition}
A $t$-equator of a sphere $S$ is a subsphere of the dimension $t$ which radius is equal to the radius of $S$. 
\end{definition}


As usual, $\overline{T}$ stands for the closure of a set $T$.

\begin{definition}
Let a metric $X$ be colored in a finite number of colors; denote these colors by $C_1,\dots,C_m$.
A \textit{chromaticity} of a point $x \in X$ is the number of sets $\overline{C_i}$, $1 \leq i \leq m$ containing $x$.
\end{definition}

\begin{lemma}[Knaster--Kuratowski--Mazurkiewicz]
Suppose that $(n-1)$-dimensional simplex is covered by closed sets $X_1$, \dots, $X_n$ in such a way that every face $I \subset [n]$ is contained in the union of $X_i$ over $i \in I$. Then all sets $X_i$ have a common point.
\label{KKM}
\end{lemma}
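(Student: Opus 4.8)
The plan is to deduce the statement from Sperner's lemma by a standard compactness argument. First I would fix an affine identification of the given $(n-1)$-dimensional simplex with $\Delta = \conv(e_1,\dots,e_n)\subset\mathbb{R}^n$, and for a point $x\in\Delta$ write $\operatorname{supp}(x)=\{i : x_i>0\}$ for the index set of the minimal face containing $x$. The covering hypothesis, applied to the face $I=\operatorname{supp}(x)$, says precisely that $x\in X_i$ for at least one $i\in\operatorname{supp}(x)$; this is the only way the hypothesis will be used.

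Next I would introduce the $m$-th barycentric subdivision $\mathcal{T}_m$ of $\Delta$, whose mesh tends to $0$ as $m\to\infty$, and label each vertex $v$ of $\mathcal{T}_m$ by some $\ell(v)\in\operatorname{supp}(v)$ with $v\in X_{\ell(v)}$, which is possible by the remark above. By construction $\ell$ is a Sperner labeling: $e_i$ gets label $i$, and a vertex lying on the face $\conv(e_i : i\in I)$ gets a label in $I$. Sperner's lemma then yields a fully-labeled cell of $\mathcal{T}_m$, i.e. points $v^{(m)}_1,\dots,v^{(m)}_n$ spanning a simplex of diameter at most $\operatorname{mesh}(\mathcal{T}_m)$ with $v^{(m)}_i\in X_i$ for every $i$. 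Finally, by compactness of $\Delta$ I would pass to a subsequence along which $v^{(m)}_1$ converges to some $x\in\Delta$; since $\diam\{v^{(m)}_1,\dots,v^{(m)}_n\}\to 0$, all the sequences $\big(v^{(m)}_i\big)_m$ converge to the same $x$, and since each $X_i$ is closed and contains $v^{(m)}_i$ for all $m$, we get $x\in\bigcap_i X_i$.

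The only genuine content is Sperner's lemma itself, whose combinatorial proof (counting the fully-labeled cells modulo $2$, or with orientations, by induction on $n$ through the $(n-2)$-dimensional boundary faces carrying the labels $1,\dots,n-1$) is the step I expect to be the technical heart; the subdivision, the labeling, and the limit are bookkeeping. Alternatively one can bypass Sperner and argue via Brouwer's fixed point theorem — to which this lemma is in fact equivalent — but the Sperner route keeps the argument elementary and self-contained, which is why I would prefer it here.
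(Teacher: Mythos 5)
The paper does not actually prove Lemma~\ref{KKM}: it is cited as the classical Knaster--Kuratowski--Mazurkiewicz theorem and invoked directly in the proof of Theorem~\ref{point_c4}, so there is no in-paper argument to compare against. Your Sperner-based derivation is the standard textbook proof and it is correct: the reduction to a Sperner labeling via $\ell(v)\in\operatorname{supp}(v)$ with $v\in X_{\ell(v)}$ is exactly the right way to use the face-covering hypothesis (the label exists because the face spanned by $\{e_i:i\in\operatorname{supp}(v)\}$ is covered by $\bigcup_{i\in\operatorname{supp}(v)}X_i$, and it automatically satisfies the Sperner boundary conditions), iterated barycentric subdivision drives the mesh to zero, Sperner's lemma supplies a rainbow cell at each stage, and the compactness/closedness argument at the end is clean. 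The only caveat is the one you already flag: this outsources the combinatorial core to Sperner's lemma, which is unproved here, so strictly speaking the write-up is a reduction rather than a self-contained proof; that is entirely appropriate given that the paper itself treats KKM as a black box.
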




The following lemma is a spherical analogue of the planar lemma from~\cite{kanel2018chromatic}. 
The proof is also analogous; we provide it in the interest of completeness.
\begin{lemma}
Let $S^2_r$ be a sphere of radius $r > \sqrt{\frac{1}{2}}$, $\varepsilon$ be a positive number, 
and $Q \subset S^2_r$ be a $\varepsilon$-neighbourhood of a curve $\xi \subset S^2_r$, such that
\[
\diam \xi > \frac{\sqrt{4r^2-1}}{r}.
\]
Then $\chi(Q) \geq 3$.
\label{bydlo}
\end{lemma}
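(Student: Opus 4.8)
The plan is to show that if $Q$ could be properly colored with $2$ colors, then the curve $\xi$—being connected—would lie entirely in one color class (up to the usual closure subtleties), and then the large diameter of $\xi$ forces two points of $\xi$ at distance exactly $1$, contradicting properness. Concretely, I would first fix a proper coloring of $Q$ in $2$ colors, say with color classes (closures) $\overline{C_1}, \overline{C_2}$. Since $Q$ is the $\varepsilon$-neighbourhood of $\xi$, it is in particular an open (relatively open in $S^2_r$) connected set containing $\xi$, hence path-connected. The key geometric input is the relation between chordal distance $1$ on the sphere $S^2_r$ and the corresponding spherical (great-circle) angular distance: two points $u,v \in S^2_r$ are at Euclidean distance $1$ iff the angle between them (seen from the center) is $\theta_0 = 2\arcsin\frac{1}{2r}$, and the spherical diameter threshold $\frac{\sqrt{4r^2-1}}{r}$ is exactly the chord length $\sqrt{2r^2 - 2r^2\cos(\pi - 2\theta_0)}$ ... rather, it is the chord corresponding to angular distance $\pi - \theta_0$; the condition $r > \sqrt{1/2}$ guarantees $\theta_0 < \pi/2$ so that this is a genuine constraint and that balls of chordal radius $1$ behave well.

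The main steps, in order, are: (1) Reduce to connectedness of a single color class. Suppose $\chi(Q) \le 2$. Consider the two closed sets $\overline{C_1} \cap \overline{Q}$ and $\overline{C_2} \cap \overline{Q}$; they cover $\overline{Q}$. Because $Q$ is connected and the coloring has no monochromatic unit-distance pair, I claim $\xi$ cannot meet both open color classes "too far apart"—more precisely, I want to argue that a path along $\xi$ from one endpoint-region to another must cross the boundary $\overline{C_1}\cap\overline{C_2}$, and near such a crossing point I can find a monochromatic unit-distance pair. (2) The unit-distance extraction: at a boundary point $x$ of the two classes, in any $\varepsilon$-neighbourhood there are points of both $C_1$ and $C_2$; I then use that on a sphere of radius $r > \sqrt{1/2}$ one can choose, for a point $y$ near $x$ in $C_1$, another point $y' \in \overline{C_1}$ at chordal distance exactly $1$ from $y$ (this is where $\diam \xi$ and the $\varepsilon$-neighbourhood width enter: the portion of $Q$ reachable is large enough to contain an arc spanning angular distance $\theta_0$), producing the contradiction. (3) Conclude $\chi(Q) \ge 3$.

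The hard part will be step (2)—making the "crossing produces a monochromatic unit pair" argument rigorous on the sphere, since unlike in the plane the unit-distance graph on a small spherical region is not simply a translation-invariant grid. I expect the right device is a pigeonhole/connectedness argument: follow $\xi$ and track the color; a $2$-coloring partitions $\xi$ into (relatively) closed monochromatic pieces whose union is $\xi$; since $\diam\xi > \frac{\sqrt{4r^2-1}}{r}$, which corresponds to angular spread exceeding $\pi - \theta_0 > \theta_0$, one of the monochromatic pieces must contain two points whose angular distance passes through $\theta_0$ (by an intermediate-value argument on a continuous distance function along the arc), hence at chordal distance exactly $1$—unless the coloring is not actually a function but uses closures, in which case the same boundary-point analysis as above applies. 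I would lift the planar proof from~\cite{kanel2018chromatic} essentially verbatim, replacing "line segment of length $> \sqrt{3}$ contains two points at distance $1$" with "spherical arc of angular diameter $> \pi - \theta_0$ on $S^2_r$ contains two points at chordal distance $1$," which is the content of the displayed diameter inequality.
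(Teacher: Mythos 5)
Your approach is genuinely different from the paper's, and it has a gap that I don't think can be repaired along the lines you sketch. You try to argue topologically: suppose a $2$-coloring exists, find a ``boundary point'' between the two colour classes as you travel along~$\xi$, and extract a monochromatic unit pair near that crossing. The obstruction is that the colour classes $C_1, C_2$ are completely arbitrary subsets of $Q$ --- they need not be open, closed, measurable, or have connected intersections with $\xi$. Your intermediate-value step (``one of the monochromatic pieces must contain two points whose angular distance passes through $\theta_0$'') silently assumes that $C_1 \cap \xi$ and $C_2 \cap \xi$ decompose into arcs, which is false for a general colouring. Your fallback (``the same boundary-point analysis as above applies'') is circular: at a point $x \in \overline{C_1} \cap \overline{C_2}$ you know nearby points of both colours exist, but you have given no reason why there should be a $y' \in \overline{C_1}$ at chordal distance exactly $1$ from a nearby $y \in C_1$. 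You also acknowledge step (2) is the hard part, and indeed it is where the argument breaks.

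The paper avoids all of this by being purely combinatorial: it exhibits an odd cycle in the unit-distance graph $G(Q)$, which immediately forces $\chi(Q)\geq 3$ regardless of how pathological the colour classes are. Concretely, the diameter condition $\diam\xi > \sqrt{4r^2-1}/r$ guarantees a $v\in\xi$ with $\|u-v\|=1$, and a flexibility argument with rhombi (using the $\varepsilon$-neighbourhood width) shows that any point in a small $\gamma$-neighbourhood of $u$ can be reached from $u$ by a path of length $4$ inside $Q$. Chaining such length-$4$ moves along $\xi$ from $u$ to $v$ and closing with the single edge $uv$ gives a cycle of length $\equiv 1 \pmod 2$. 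Also, for the record, the planar lemma in~\cite{kanel2018chromatic} that this lemma imitates is proved the same way (odd cycle), not by the topological/closure argument you recall --- so ``lifting it essentially verbatim'' would actually lead you to the paper's proof, not to your proposal. If you want to pursue a non-constructive route, you would need additional machinery such as a regularization lemma converting an arbitrary proper colouring into one with topologically tame colour classes, which is a nontrivial and separate ingredient not present in your sketch.
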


\begin{proof}
Without loss of generality $\varepsilon < 1$. Denote by $G(Q)$ the corresponding graph; we are going to find an odd cycle in $G(Q)$.

Consider a point $u \in \xi$. Since $\diam \xi > \frac{\sqrt{4r^2-1}}{r} = \diam S(u;1)$,
the intersection of $S(u;1)$ and $\xi$ is non-empty. Let $v \in S(u;1) \cap \xi$; consider such points $v_1$, $v_2$, $v_3$, $v_4 \in S^2_r$ that
$\lVert u-v_1 \rVert = 1$; $\lVert v_i-v_{i+1} \rVert = 1$; $i = 1,2,3$.
If the angles at the vertices of polygonal chain $v u v_1 v_2 v_3 v_4$ are at most $\frac{\varepsilon}{2}$, then $\lVert v - v_1 \rVert < \frac{\varepsilon}{2}$, $\lVert u - v_2 \rVert < \frac{\varepsilon}{2}$, $\lVert v - v_3 \rVert < \varepsilon$, $\lVert u - v_4 \rVert < \varepsilon$, and hence $v_i \in Q$, $i = 1,2,3,4$.

\begin{figure}[ht]
  \centering
  \includegraphics[width=10cm]{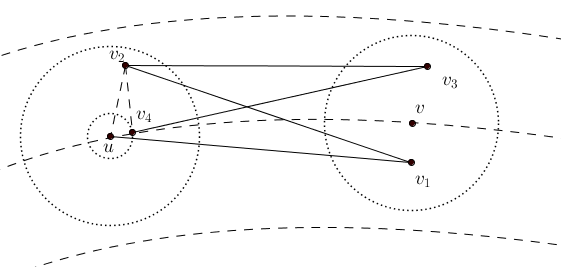}\\
  \caption{A path of length four between $u$ and $v_4$.}
\end{figure}

\noindent Note that
\[
l_1 = \lVert u - v_2 \rVert \in \left [0; 2 \sin \frac{\varepsilon}{4} \right],
\]
\[
l_2 = \lVert v_2 - v_4 \rVert \in \left [0; 2 \sin \frac{\varepsilon}{4} \right]
\]
can be chosen arbitrarily, and the oriented angle between vectors $\overrightarrow{v_2 u}$ and $\overrightarrow{v_2 v_4}$ can be independently chosen from $[-\frac{\varepsilon}{4}; \frac{\varepsilon}{4}]$. 
Fix the line containing vector $\overrightarrow{v_2 u}$; one may choose it orthogonal to $u v$. 
Then a set of all possible $v_4$ contains a rhombus centered at $u$ with the side length $2 \sin \frac{\varepsilon}{4}$ and the angle $\frac{\varepsilon}{2}$. 
Then $G(Q)$ contains a path of length 4 between $u$ and an arbitrary point from a $\gamma$-neighbourhood of $u$, where $\gamma =  \sin  \frac{\varepsilon}{2} \sin \frac{\varepsilon}{4}$.

Thus one may move from $u$ to $v$ along $\xi$ by steps of size at most $\gamma$. 
Every such step corresponds to a path of length 4 in $G(Q)$; since $v$ is adjacent to $u$ we find a desired odd cycle in $G(Q)$.
\end{proof}

\begin{lemma}
    Suppose that a sphere $S_r^2 \subset \mathbb{R}^3$, $r>\sqrt \frac{1}{2}$ has a proper coloring a finite number of colors.
    Then it has a point with the chromaticity at least 3.
\label{c3}
\end{lemma}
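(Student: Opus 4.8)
The plan is to argue by contradiction: assume that every point of $S^2_r$ has chromaticity at most $2$ in a given proper finite coloring, and derive a contradiction using Lemma~\ref{bydlo} together with a connectedness/covering argument. First I would record the consequence of the chromaticity assumption: for each color $C_i$, the closed set $\overline{C_i}$ is a proper closed subset whose boundary points lie in at least two of the $\overline{C_j}$, and no point lies in three of them. In particular the sets $\{\overline{C_i}\}$ form a closed cover of $S^2_r$ by finitely many sets with ``multiplicity $2$''. I would like to extract from this a single color class $C_i$ whose closure $\overline{C_i}$ contains a curve $\xi$ of diameter exceeding $\frac{\sqrt{4r^2-1}}{r}$, because then, taking the $\eps$-neighbourhood $Q$ of $\xi$ inside $\overline{C_i}$ for small $\eps$ — which is possible since $\overline{C_i}$ is closed and $\xi$ lies in its interior relative to... — wait, that is not automatic; the point is that $Q$ must consist of points \emph{colored} $C_i$, not merely in $\overline{C_i}$.

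So the cleaner route: pick a color, say $C_1$, that is ``large'' — its closure $\overline{C_1}$ has diameter $> \frac{\sqrt{4r^2-1}}{r}$ (such a color exists since finitely many small-diameter sets cannot cover $S^2_r$; here ``small'' means diameter $\le \frac{\sqrt{4r^2-1}}{r}$, and one must check that this threshold is $< \diam S^2_r = 2r$ when $r > \sqrt{1/2}$, which is an elementary inequality $4r^2 - 1 < 4r^4$). Now inside $\overline{C_1}$ I want a curve $\xi$ of large diameter all of whose points are \emph{genuinely} colored $C_1$ (not boundary points). Here I would use the chromaticity-$\le 2$ hypothesis: the boundary $\partial \overline{C_1}$ is covered by the other closures $\overline{C_i}$, $i \ge 2$, and a point of $\overline{C_1}$ failing to be colored $C_1$ must lie in some $\overline{C_i}$ with $i\ge 2$; by the multiplicity-$2$ condition such a point lies in exactly one further closure, hence on $\partial\overline{C_1}$. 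Thus $\overline{C_1} \setminus C_1 \subseteq \partial \overline{C_1}$, so $\Int \overline{C_1}$ (relative to $S^2_r$) is entirely colored $C_1$. It then suffices to find a curve of diameter $> \frac{\sqrt{4r^2-1}}{r}$ inside $\Int \overline{C_1}$ together with an $\eps$-neighbourhood of it, and apply Lemma~\ref{bydlo} to conclude $\chi(Q) \ge 3$; but $Q$ is monochromatic, contradiction.

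The main obstacle I anticipate is exactly this interior-curve extraction: a closed set of large diameter need not have a large-diameter connected interior (it could be a Cantor-like dust, or totally disconnected). To handle this I would choose $C_1$ not merely of large diameter but maximal in a suitable sense, or instead run a direct KKM-type / connectedness argument: cover $S^2_r$ by small geodesic balls, note that some color $C_i$ must appear in a chain of adjacent balls connecting two near-antipodal points (because at each ball one of the finitely many colors is ``locally dominant'' and a pigeonhole along paths in the nerve forces a monochromatic chain), and then the union of that chain, intersected with $\Int\overline{C_i}$, contains the required curve after a small perturbation. Alternatively, one can invoke the planar lemma from~\cite{kanel2018chromatic} locally on a hemisphere chart, since for small $\eps$ the sphere is bi-Lipschitz to a planar disk with distortion tending to $1$. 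Once the monochromatic large-diameter curve-with-neighbourhood $Q \subseteq S^2_r$ is in hand, Lemma~\ref{bydlo} gives $\chi(Q)\ge 3$, contradicting that $Q$ is a single color class, and the lemma follows.
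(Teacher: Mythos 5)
Your proposal correctly identifies the key tool (Lemma~\ref{bydlo}) and the right overall shape (contradict it by exhibiting a large region seeing few colors), but the execution has two genuine gaps, only one of which you flag. First, the deduction that $\Int\overline{C_1}$ is entirely colored $C_1$ does not follow from chromaticity $\le 2$. A point $x\in\Int\overline{C_1}$ carrying some other color $j\ne 1$ merely lies in $\overline{C_1}\cap\overline{C_j}$; chromaticity $2$ is perfectly allowed there, and lying in two closures does not place $x$ on $\partial\overline{C_1}$. So the ``interior is monochromatic'' step is incorrect, independently of the connectedness problem. Second, as you yourself note, even the correct substitute --- a curve of diameter exceeding $\frac{\sqrt{4r^2-1}}{r}$ with an $\varepsilon$-neighbourhood entirely inside one color class --- is not obtainable merely from a color whose closure has large diameter, and your fallbacks are not carried through: a pigeonhole over balls along a path produces a color that recurs, not a chain of \emph{consecutive} balls sharing a dominant color, so the claimed monochromatic chain is not forced.

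The paper's proof avoids both issues, and the comparison is instructive. It never aims for a monochromatic region at all: since Lemma~\ref{bydlo} yields $\chi(Q)\ge 3$, a region of the required diameter that sees at most \emph{two} colors already gives a contradiction, and two-colored regions are far easier to produce than monochromatic ones. Concretely, assuming every point has chromaticity $\le 2$, compactness yields a uniform $\varepsilon_0>0$ so that every $\varepsilon_0$-ball meets at most two of the $\overline{C_i}$; the sphere is partitioned into cells of size comparable to $\varepsilon_0$; adjacent cells with the same two-color palette are merged into $2$-colored domains separated by monochromatic ones; and a nested sequence of such domains, each larger than the one it encloses by at least the cell size $\delta$, must eventually exceed the threshold $\frac{\sqrt{4r^2-1}}{r}<2r$, at which point Lemma~\ref{bydlo} applies to a curve crossing it. The two moves you are missing are therefore: aim for a $2$-colored region rather than a monochromatic one, and replace the analysis of the sets $\overline{C_i}$ by the compactness-plus-cell-decomposition argument that actually produces such a region of controlled diameter.
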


\begin{proof}
Note that for $r > \sqrt{\frac 12}$
\[
\frac{\sqrt{4r^2-1}}{r} < 2r.
\]

By compactness of the sphere it is sufficient to show that  there is a spherical ball with an arbitrarily small radius containing points of at least 3 colors.
Suppose the contrary: there is a proper coloring of the sphere and $\varepsilon>0$ such that every spherical ball with the radius $\varepsilon$ 
is colored in at most 2 colors.
Consider a partition of the sphere onto cells such that every cell contains a ball with the radius $\delta = \varepsilon_0/100$ and is contained in a ball
of the radius  $\varepsilon_0/10$.
Then every cell contains points of at most two colors, moreover all the adjacent cells are colored in the same two colors.

Consider an arbitrary cell with two colors (say, colors 1 and 2). Let $A_0$ be the region which is maximal by inclusion, that contains cells with one- or two-colored cells of colors 1 and 2.
The diameter of $A_0$ is smaller than $\frac{\sqrt{4r^2-1}}{r} < 2r$ otherwise, by looking at any path between diametrally opposed points, we have a contradiction with Lemma~\ref{bydlo}.
Let us consider the outer boundary $p$ of the reigon $A_0$. Every cell adjacent to $p$ is adjacent to some cell not in $A_0$, hence it is monochromatic; moreover, colors of all cells from $A_0$ along $p$ are the same, otherwise there would be a ball that contains cells of two different colors and cell not from $A_0$, which contradicts our assumption.
So we may assume that all cells from $A_0$ along $p$ are of color 1. Same argument shows that cells not from $A_0$ along $p$ cannot contain two different colors that are not 1 or 2, and cannot contain the color 2. Therefore all cells adjacent to $p$ are colored in colors 1 or 3 (maybe both). Consider the region $A_1$, that contains cells along $p$ of colors 1 and 3, and is maximal by inclusion. We can apply to $A_1$ the smae argument, and by induction we obtain the sequence of 2-colored regions $A_i$. Note that (spherical) diameter of $A_i$ is increasing by at least $\delta$ each step, so eventually we obtain the contradiction to Lemma~\ref{bydlo}.
\end{proof}

The proof of the main result require technical statement on stability of circumscribed circle of a triangle with vertices of a form $(0,0,0,b_1,\ldots,b_6)$ with respect to a shifts by vectors from the main subspace $\mathbb{R}^3$, i.e. vectors of type $(p,q,r, 0, \dots, 0)$. Such shifts will be called \textit{orthogonal}. The next lemma will be applied for the case of $S^5$, but we prove it in the general case. 

\begin{lemma}
    Suppose that several points are chosen on $S^k$ so that minimal distance between two chosen points is $\Omega(m^{-2})$. Then there is the triangle $\mathcal T$ which vertices are amongst the chosen points satisfying the following condition. Every orthogonal shift of it vertices by $O(m^{-3})$ causes change of radius $R$ of circumscribed circle of $\mathcal T$ by $O\bigl(\frac R{m^2}\bigr)$ and shift of its center by $O(\frac Rm)$.
    \label{kostul}
\end{lemma}

\begin{proof}
Let us find a triangle $\mathcal T_0$ from selected points with heights $\Omega(m^{-2})$. Assume the contrary, id est that there is no such triangle. Let us consider the maximum distance between these points; say, it is achieved between points $A$ and $B$.
Then all other points should lie in the $o(m^{-2})$-neighborhood of the great circle $AB$ (any great circle $AB$, if the points $A$ and $B$ were diametrically opposite): indeed, otherwise the height from point $C$ to $AB$ is equal to $\Omega(m^{-2})$ and it is the smallest of the heights of triangle $ABC$, since points $A$ and $B$ were chosen at the maximum distance and $ABC$ is suitable for the role of $\mathcal{T}_0$.

Let us consider the projections of the selected points onto the great circle $AB$ (they are uniquely determined). Since the pairwise distances between the selected points are equal to $\Omega (m^{-2})$, and the points lie in the $o(m^{-2})$-neighborhood of the great circle $AB$, the projections are separated from each other by at least by $\Omega (m^{-2})$.
One of the arcs $AB$ contains the projection of at least $m_1 \geq m/2$ points.
Let us number the points according to the projection on this arc $AB$; let $C$ be the point with the number $[m_1/2]$. 
Then $AC$ and $BC$ are equal to $\Omega (1/m)$.
Let $O$ be the circumcenter of triangle $ABC$.
Let us denote the lengths of the sides $AB$, $BC$, $AC$ by $c$, $a$, $b$, respectively; let the lengths of the heights be equal to $h_a$, $h_b$, $h_c$.

It is clear that $\angle ACB$ is the largest of the angles of triangle $ABC$ and
\[
\angle ACB \leq \angle OCA + \angle OCB = \arccos \frac{b}{2R} + \arccos \frac{a}{2R} \leq
2\arccos \frac{\Omega (1/m)}{2R},
\]
since the triangles $ACO$ and $BCO$ are isosceles.
Then
\[
2\arccos \frac{\Omega (1/m)}{2R} = \pi - \frac{\Omega (1/m)}{R}.
\]
Therefore, $\sin \angle ACB = \sin (\pi - \angle ACB) = \Omega (1/m)$.
Since $AB$ is the longest side, the height from point $C$ is the smallest. Then, since the sine of at least one of the angles $A$ and $B$
is also equal to $\Omega (1/m)$, the height from point $C$ is equal to $\Omega (1/m^2)$.

The triangle $\mathcal T_0 = ABC$ has been found; let us show that it is suitable as $\mathcal T$. Let us keep the notation for the parameters of the triangle $\mathcal{T}_0$ introduced above. Let the shifted points be $A'$, $B'$ and $C'$. Let us denote by $\Delta q$ the change in the value of $q$ during the transition from $ABC$ to $A'B'C'$.

Let us show that an orthogonal shift of the ends of the segment $y_1y_2$ by $O(m^{-3})$ changes (increases) the length of the segment $l$ by $O(m^{-6}l^{-1})$.
Let us denote the shifted points $z_1$, $z_2$, respectively. Due to orthogonality, $(y_i - y_j, z_j - y_j)=0$.
The square of the new length is
\[
(z_1 - z_2, z_1 - z_2) = \|(z_1-y_1)+(y_1-y_2)+(y_2-z_2)\|^2 =
\]
\[
  = (z_1 - y_1, z_1 - y_1) + (y_1 - y_2, y_1 - y_2) + (z_2 - y_2, z_2 - y_2) - 2 (z_1 - y_1, z_2 - y_2);
\]
That is, the difference in the squares of the lengths is estimated as
\[
  (z_1 - z_2, z_1 - z_2) - (y_1 - y_2, y_1 - y_2) = O(m^{-6}).
\]
It remains to apply the difference of squares formula.

It turns out that $\Delta a = O(m^{-6}a^{-1})$, similarly for other sides.
Let $H$ be the base of the height $CH$, since $AB$ is the greatest, $H$ belongs to the segment $AB$.
Note that the length of the height $h_c$ cannot decrease, and on the other hand, the distance from the shifted vertex $C$ to the point that is projected into $H$ changes by no more than $O(m^{-6}h_c^ {-1})$, and the length of the new height $h'_c$ does not exceed this distance.
Let $S$ be the area of triangle $ABC$, then
\[
\Delta S = O(c \Delta h_C + \Delta c \cdot h_C) = O(h_c m^{-6}c^{-1}) + O(cm^{-6}h_c^{-1} ),
\]
hence,
\[
\frac{\Delta S}{S} = O(m^{-6}c^{-2}) + O(m^{-6}h_c^{-2}) = O(m^{-2 }).
\]

Using the well-known formula
\[
R = \frac{abc}{4S},
\]
we get
\[
\Delta R = O \left (\max \left (\frac{\Delta a \cdot bc}{S}, \frac{\Delta b \cdot ac}{S}, \frac{\Delta c \cdot ab }{S}, \frac{abc \Delta S}{S^2}\right ) \right ) =
O \left (\max \left (\frac{\Delta a }{a}R, \frac{\Delta b }{b}R, \frac{\Delta c }{c}R, \frac{\Delta S}{S}R\right ) \right) = O\left(\frac{R}{m^2}\right),
\]
which is what was required.

Let us limit the shift of the center of the circumscribed circle when changing along one coordinate.
We showed above that the heights and sides of a triangle change slightly when the vertices are orthogonally shifted by $O(m^{-3})$, which means it will be possible to repeat the following estimate several times.

Let us consider three-dimensional Cartesian coordinates in which $C$ is the center, the plane $ABC$ is generated by the first two coordinates, and the latter corresponds to the infinitesimal shift. Then the normal to the plane $ABC$ has the form
\[
\bar v_1 = \bar {AC} \times \bar{BC} = (0,0,2S).
\]
Then the normal to the plane $A'B'C'$ is equal to
\[
\bar v_2 = \bar {A'C'} \times \bar{B'C'} = (A_y B'_z - A'_z B_y, A_x B'_z - A'_z B_x ,2S).
\]
Without loss of generality, $a \geq b$ and then $|A_x|, |B_x|, |A_y|, |B_y| \leq a$. Therefore $|A_y B'_z - A'_z B_y|, |A_x B'_z - A'_z B_x| = O(am^{-3})$.
Recall that $S = 0.5 a h_a = \Omega (am^{-2})$, which implies $|A_y B'_z - A'_z B_y|, |A_x B'_z - A'_z B_x| = O(Sm^{-1})$.
Let us estimate the angle $\phi$ between the planes $ACB$ and $A'B'C'$:
\[
\cos \phi = \frac{(v_1,v_2)}{\sqrt{(v_1,v_1) \cdot (v_2,v_2)}} = \frac{4S^2}{2S \cdot \sqrt{4S^2 + O(S^2m^{-2})}} = 1 - O(m^{-2}), \quad \quad \phi = O(m^{-1}).
\]
Consequently, the change in the center does not exceed $O(R \sin \phi) = O(R/m)$.

\end{proof}

\section{Proof of Theorem~\ref{point_c4}}
\label{pr1}

Suppose the contrary. Let $C_i$~--- be a set of points $\mathbb{R}^n$ of color $i$, $1\leq i \leq m$.
Define
\[
{C^*_i:=\overline{\Int \, \overline{C_i}}}\qquad \mbox{(the closure of the interior of the closure)}.
\]
Split every $C^*_i$ into connected components (with respect to the standard topology): 
\[
C^*_i = \bigcup_{\alpha\in A_i} D_{\alpha}.
\]
Put also $\{ D_\alpha \} = \bigcup\limits_{i=1}^m \bigcup\limits_{\alpha \in A_i} D_\alpha$.

\paragraph{(i)} {\it Sets $C^*_i$ cover $\mathbb{R}^n$.}
Suppose the contrary, i.e. $\exists v: \forall i \ \ v\notin C^*_i$. Then there is an open ball $B(v; \eta)$ such that
\[
B(v; \eta)\cap C^*_i =\emptyset; \ \ B(v; \eta)\subset\bigcup C_i.
\]
Consider an arbitrary ball
\[
B^1 \subset B(v; \eta) \setminus \overline{C}_1.
\]
Then $B^1$ cannot be a subset of $\overline{C}_i$, otherwise the intersection of the interior of $\overline{C}_i$ and $B(v; \eta)$ is nonempty. Define a sequence of balls \[
B^{k+1} \subset B^k \setminus \overline{C}_k.
\]
Note that points of $B^{m+1}$ do not belong to any $\overline{C}_i$, which is a contradiction.

\paragraph{(ii)} {\it Suppose that a point $v \in T$ belongs to exactly $k$ sets $C^*_i$. Assume that $k \leq n$ (otherwise the chromaticity of $v$ is at least $n+1$). Then for every $\mu_0>0$ there is $\mu<\mu_0$ such that the sphere $S(v; 1-\mu)$ does not intersect at least one of those $k$ sets.}

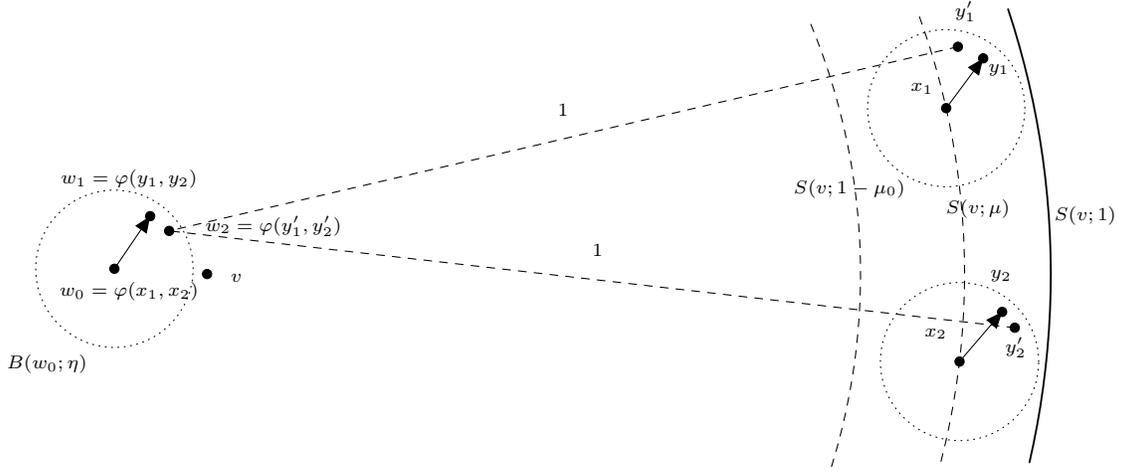
\begin{figure}[ht]
  \centering
  \begin{tikzpicture}[line cap=round,line join=round,>=triangle 45,x=1cm,y=1cm, scale=0.7]
\draw [shift={(-8.32,-1.17)},line width=0.4pt,dash pattern=on 3pt off 3pt]  plot[domain=-0.29932821702574053:0.4022887552879237,variable=\t]({1*12.411897518107374*cos(\t r)+0*12.411897518107374*sin(\t r)},{0*12.411897518107374*cos(\t r)+1*12.411897518107374*sin(\t r)});
\draw [shift={(-8.32,-1.17)},line width=0.7pt]  plot[domain=-0.22530212325946675:0.3197384846772764,variable=\t]({1*16.02500546021748*cos(\t r)+0*16.02500546021748*sin(\t r)},{0*16.02500546021748*cos(\t r)+1*16.02500546021748*sin(\t r)});
\draw [shift={(-8.32,-1.17)},line width=0.4pt,dash pattern=on 3pt off 3pt]  plot[domain=-0.25003593626230725:0.3537648719359465,variable=\t]({1*14.38739726288254*cos(\t r)+0*14.38739726288254*sin(\t r)},{0*14.38739726288254*cos(\t r)+1*14.38739726288254*sin(\t r)});
\draw [line width=0.4pt,dotted] (5.97089385654596,-2.833596339541816) circle (1.5028885105088576cm);
\draw [line width=0.4pt,dotted] (5.7176668792211505,1.9829523605689863) circle (1.4934274487630022cm);
\draw [line width=0.4pt,dotted] (-10.08,-1.07) circle (1.494523335381552cm);
\draw [->,line width=0.4pt] (-10.08,-1.07) -- (-9.4,-0.07);
\draw [->,line width=0.4pt] (5.97089385654596,-2.833596339541816) -- (6.78,-1.89);
\draw [->,line width=0.4pt] (5.7176668792211505,1.9829523605689863) -- (6.42,2.93);
\draw [line width=0.4pt,dash pattern=on 3pt off 3pt] (7.02,-2.19)-- (-9.04,-0.35);
\draw [line width=0.4pt,dash pattern=on 3pt off 3pt] (5.94,3.15)-- (-9.04,-0.35);
\begin{scriptsize}
\draw [fill=black] (-8.32,-1.17) circle (2.5pt);
\draw[color=black] (-7.76,-1.24) node {$v$};
\draw[color=black] (3.88,0.46) node {$S(v; 1-\mu_0)$};
\draw[color=black] (8.36,-0.06) node {$S(v; 1)$};
\draw[color=black] (6.32,0.08) node {$S(v; \mu)$};
\draw [fill=black] (5.97089385654596,-2.833596339541816) circle (2.5pt);
\draw[color=black] (5.52,-2.3) node {$x_2$};
\draw [fill=black] (5.7176668792211505,1.9829523605689863) circle (2.5pt);
\draw[color=black] (5.26,2.36) node {$x_1$};
\draw [fill=black] (-10.08,-1.07) circle (2.5pt);
\draw[color=black] (-9.8,-1.5) node {$w_0 = \varphi(x_1,x_2)$};
\draw [fill=black] (6.42,2.93) circle (2.5pt);
\draw[color=black] (6.72,2.72) node {$y_1$};
\draw [fill=black] (6.78,-1.89) circle (2.5pt);
\draw[color=black] (6.76,-1.2) node {$y_2$};
\draw [fill=black] (-9.4,-0.07) circle (2.5pt);
\draw[color=black] (-9.8,0.64) node {$w_1 = \varphi(y_1,y_2)$};
\draw [fill=black] (7.02,-2.19) circle (2.5pt);
\draw[color=black] (7.04,-2.56) node {$y'_2$};
\draw [fill=black] (5.94,3.15) circle (2.5pt);
\draw[color=black] (6.08,3.82) node {$y'_1$};
\draw [fill=black] (-9.04,-0.35) circle (2.5pt);
\draw[color=black] (-7.06, - 0.25) node {$w_2 = \varphi(y'_1,y'_2)$};
\draw[color=black] (-0.9,-0.72) node {1};
\draw[color=black] (-1.56,1.96) node {1};
\draw[color=black] (-11.36,-2.88) node {$B(w_0; \eta)$};
\end{scriptsize}
\end{tikzpicture}
  \caption{Illustration to item~(ii).}
\end{figure}

\noindent 

We can assume without loss of generality that $v \in C^*_i$, ${1\leq i\leq k}$. Suppose the contrary, i.e. there is such a $\mu_0>0$ that for every $\mu \in (0, \mu_0)$ holds
\[
    S(v; 1-\mu) \cap C^*_i \neq \emptyset, \quad 1 \leq i \leq k.
\]
By the definition of $C^*_i$ any neighbourhood of an arbitrary point $x \in C^*_i$ contains a point from $\operatorname{Int} \overline{C}_i$. Hence, the set 
\[
    \mathcal{M}_0:= \{\mu \in (0, \mu_0) \; |\; \exists S(v; 1-\mu) \cap \operatorname{Int} \overline{C}_i = \emptyset \}
\]
is closed and nowhere dense. 

Fix some $\mu \in (0, \mu_0) \setminus \mathcal{M}_0$. One may choose points $x_1, ... , x_k$ in such a way that
\[
x_i\in S(v; 1-\mu)  \cap \Int \overline{C}_i, \quad \ 1\leq i \leq k;
\]
and $\{v, x_1, ... , x_k\}$ are in a general position (i.e. all the simplices are non-degenerate). 
Consider any $\eta > 0$ such that $B(x_i; \eta) \subset C^*_i$, $1 \leq i \leq k$. Put
\[
z \in B(0; \eta); \quad y_i = x_i + z.
\]

Define 
\[
w_0 = \phi (x_1, \dots, x_k) := \Argmin_{u \in U} \|u - v\|, \quad U =  \bigcap\limits_{1 \leq i \leq k} S(y_i; 1),
\]
\[
w_1 = \phi (y_1, \dots, y_k).
\]
By the construction the color of $w_1$ differs from the colors of $y_1, \dots , y_k$.

In a small neighbourhood of $\{y_i\}$ function $w(\cdot)$ is properly defined and continuous. Choose points
\[
y'_i \in C_i, \quad 1 \leq i \leq k,
\]
for which exists $w_2 = \phi(y'_1, \dots, y'_k)$. Then
\[
w_2 \in \bigcup_{j=k+1}^{m} {C}_j.
\]
At the same time the quantity
\[
\delta(y'_1, ... ,y'_k) = \max_{1 \leq i \leq k} \lVert y'_i - y_i \rVert
\]
can be chosen arbitrarily small and hence 
\[
w_1 \in \bigcup_{j=k+1}^{m} \overline{C_j}.
\]
Since $z \in B(0; \eta)$ was chosen arbitrarily
\[
B(w; \eta) \subset \bigcup_{j=k+1}^{m} \overline{C_j}.
\]

Hence an arbitrary neighbourhood of $w_0$ has an inner point of at least one set $\overline{C_j}$, $k+1 \leq j \leq m$, so $w_0 \in C_j^*$ for some $j$.
Note that $w_0 = \phi(y_1, \dots, y_k) \to v$ with $\mu \to 0$, thus $v$ belongs to at least one of sets $C_j^*$, $k+1 \leq j \leq m$, which contradicts to the initial assumption.

\paragraph{(iii)} {\it There is a cover of $T$ by sets from $\{D_\alpha\}$, such that every set from the cover is contained in a closed unit ball.}
By (ii) every point is covered by at least one set that satisfies the condition.
Axiom of choice finishes the proof of the item. For every color $i$ denote by $\{D^{(i)}_\beta\}$ the chosen sets. 

\paragraph{(iv)} {\it There is a finite cover of $T$ by closed sets $D'_{ik}$, $1 \leq i \leq m$, $1 \leq k \leq K_i$,
such that every set from the cover is the union of some sets from $\{D_\alpha\}$ and also is contained in a closed unit ball.}

For every $i$, $1 \leq i \leq m$ consider a sequence $v^i_1, v^i_2, \dots \in \bigcup D^{(i)}_\beta$ such that 
\[
\gamma(v_j^i) = i;
\]
\[
    v^i_{s+1} \in \bigcup D^{(i)}_\beta \setminus \bigcup_{1 \leq j \leq s}  B(v^i_j; 1). 
\]
Let the sequence be maximal (with respect to inclusion). 
The pairwise distances $v^i_j$, $j=1,2,\dots$ are at least 1, so the sequence is finite because $T$ is bounded. 
Now let us define 
\[
    D'_{ik} = B(v^i_k; 1) \cap \left( \bigcup D^{(i)}_\beta\ \right) \setminus \bigcup_{1\leq j \leq k-1} D'_{ij}.  
\]
Every set $D'_{ik}$ is separated from other connected components of $C^*_i$ by a neighbourhood of a sphere,
without points from $C^*_i$ (see Fig.~\ref{cover}). Thus these sets are closed.

\begin{figure}[ht]
  \centering

    \begin{tikzpicture}[line cap=round,line join=round,>=triangle 45,x=1cm,y=1cm, scale=0.6]
\draw [line width=0.6pt,dash pattern=on 2pt off 2pt] (3.42,-2.87) circle (4.524820438426259cm);
\draw [rotate around={-46.54815769897798:(3.46,-0.13)},draw=none,fill=black,pattern=north east lines,pattern color=black] (3.46,-0.13) ellipse (0.7333497720806829cm and 0.5136164796915984cm);
\draw [rotate around={82.5685920288265:(4.48,-5.43)},draw=none,fill=black,pattern=north east lines,pattern color=black] (4.48,-5.43) ellipse (0.9490713489389879cm and 0.8279712708644338cm);
\draw [rotate around={-7.125016348901787:(2.12,-4.66)},draw=none,fill=black,pattern=north east lines,pattern color=black] (2.12,-4.66) ellipse (1.3756482360612015cm and 1.1687206977624272cm);
\draw [rotate around={45:(1.26,-1.65)},draw=none,fill=black,pattern=north east lines,pattern color=black] (1.26,-1.65) ellipse (0.8906971032054911cm and 0.7671644736682308cm);
\draw [rotate around={67.8336541779168:(2.81,-1.48)},draw=none,fill=black,pattern=north east lines,pattern color=black] (2.81,-1.48) ellipse (0.6737363660894117cm and 0.6073884185522189cm);
\draw [rotate around={15.945395900922858:(3.37,-2.79)},draw=none,fill=black,pattern=north east lines,pattern color=black] (3.37,-2.79) ellipse (0.6573345461081204cm and 0.5473469699442631cm);
\draw [rotate around={73.30075576600636:(9.42,-2.99)},draw=none,fill=black,pattern=north east lines,pattern color=black] (9.42,-2.99) ellipse (0.8932719463366976cm and 0.6368161195448689cm);
\draw [rotate around={-17.65012421993008:(11.1,-1.75)},draw=none,fill=black,pattern=north east lines,pattern color=black] (11.1,-1.75) ellipse (1.0414373990583705cm and 0.9334837203494613cm);
\draw [rotate around={-70.70995378081128:(8.78,-0.41)},draw=none,fill=black,pattern=north east lines,pattern color=black] (8.78,-0.41) ellipse (0.773865360110162cm and 0.6475087610051541cm);
\draw [rotate around={-58.13402230639642:(10.95,-3.82)},draw=none,fill=black,pattern=north east lines,pattern color=black] (10.95,-3.82) ellipse (0.7658953953945954cm and 0.6299172617786087cm);
\draw [rotate around={-30.784146526326413:(5.81,2.37)},draw=none,fill=black,pattern=north east lines,pattern color=black] (5.81,2.37) ellipse (1.0685428172782818cm and 0.9178691368365138cm);
\draw [rotate around={-39.98688624496419:(11.15,2.77)},draw=none,fill=black,pattern=north east lines,pattern color=black] (11.15,2.77) ellipse (0.839144180642144cm and 0.735161856944148cm);
\draw [rotate around={-39.98688624496407:(9.17,-5.33)},draw=none,fill=black,pattern=north east lines,pattern color=black] (9.17,-5.33) ellipse (0.8391441806421522cm and 0.7351618569441547cm);
\draw [rotate around={-46.54815769897799:(6.44,-3.17)},draw=none,fill=black,pattern=north east lines,pattern color=black] (6.44,-3.17) ellipse (0.7333497720806627cm and 0.5136164796915842cm);
\draw [shift={(9.46,-2.85)},line width=0.6pt, dash pattern=on 2pt off 2pt]  plot[domain=-2.297012134077718:2.3036346264301257,variable=\t]({1*4.4718228945252285*cos(\t r)+0*4.4718228945252285*sin(\t r)},{0*4.4718228945252285*cos(\t r)+1*4.4718228945252285*sin(\t r)});

\begin{scriptsize}
\draw [fill=black] (3.42,-2.87) circle (1.5pt);
\draw[color=black] (2.38,-2.72) node {$v^1_1$};
\draw[color=black] (-0.6,1.52) node {$S(v^1_1; 1-\mu_1^1)$};
\draw[color=black] (0.68,-3.56) node {$D'_{11}$};
\draw[color=black] (10.48,-5.44) node {$D'_{12}$};
\draw [fill=black] (9.46,-2.85) circle (1.5pt);
\draw[color=black] (8.94,-2.1) node {$v^1_2$};
\end{scriptsize}
\end{tikzpicture}
  \caption{Illustration to item (iv). The construction of sets $D'_{ik}$}
  \label{cover}
\end{figure}
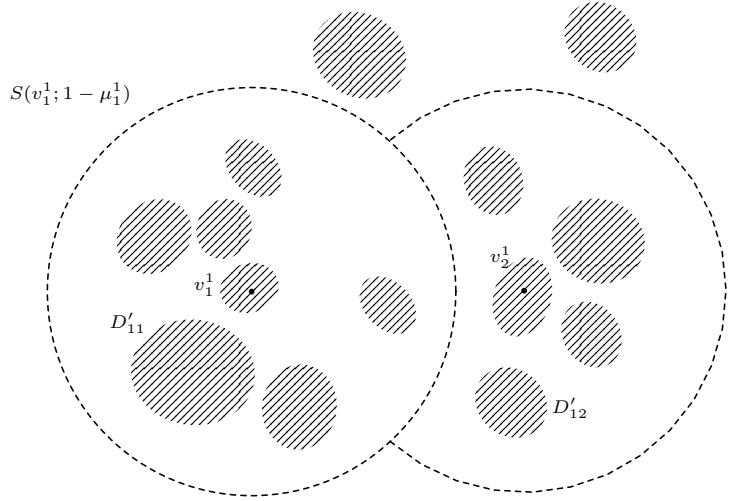

Come back to the main construction and note that every set $D'_{ik}$ cannot intersect every face of simplex $T$, 
because it is contained in an open unit ball while the inner radius of $T$ is equal to 1.
Split the cover $\mathcal{D}' = \bigcup_{i} \{D'_{ik}\}$ into $n+1$ subfamilies, in the way that every set subfamily consists of sets that do not intersect a face of $T$. Clearly there is a bijection between subfamilies and the vertices of $T$. Let $X_i$, $i = 1,\dots, n+1$ be the unions of sets over corresponding subfamilies. 
By Lemma~\ref{KKM} sets $X_i$ have a common point $x_*$, and thus an arbitrary neighbourhood of $x_*$ intersects with at least $n+1$ sets from $\{D_\alpha\}$. 
They belong to at least $n+1$ different $\{C^*_i\}$, because $\{D_\alpha\}$ are connected components. Hence, $x^*$ has the chromaticity at least $n+1$.

\section{Proof of the main result}
\label{pr2}

\paragraph{Outline of the proof.} 
Suppose the contrary to the statement.
First, we find points $v_1, v_2, v_3, v_4$ of different colors, such that the intersection $I$ of attached sphere $S(v_1,v_2,v_3,v_4;1)$ and the slice contains 2-equator $S^2$ of the sphere and we also require the radius of the sphere to be close to 1.

Then $I$ is close (in the sense of Hausdorff distance) to $S^2_{1-\eta} \times [0, \eps]^3$, where $\eta$ is small enough.
Then one can follow the arguments from~\cite{kanel2018chromatic}, that were applied in the case of 2-dimensional slices.
Note that the sets of colors of $I$ and $\{v_1, v_2, v_3, v_4\}$ are disjoint.

Let us find points $v_5, v_6, v_7 \in I$, such that an equator of the corresponding attached sphere belongs to the slice. 
The attached sphere of $v_1, \dots, v_7$ has an equator belonging to the slice, so the intersection of $v_1, \dots, v_7$ contains a spherical neighbourhood of a circle. It requires 3 additional colors in addition to the colors of points $v_1, \dots, v_7$.

\begin{figure}[ht]
  \centering
  \begin{tikzpicture}[line cap=round,line join=round,>=triangle 45,x=1cm,y=1cm]
\draw [line width=1pt] (1.44,-1.21) circle (4.66244571013969cm);
\draw [rotate around={-0.252959855370859:(1.45,-1.15)},line width=0.6pt,dotted] (1.45,-1.15) ellipse (4.650585080653886cm and 1.051970338175196cm);
\draw [rotate around={-74.94352857261201:(3.98,-0.53)}, color = lightgray, line width=3.8pt,dash pattern=on 3pt off 3pt] (3.98,-0.53) ellipse (3.732580860995107cm and 1.1787111112851052cm);
\draw [line width=0.4pt,dash pattern=on 3pt off 3pt] (1.18,-0.89)-- (5.56,0.35);
\draw [line width=0.4pt,dash pattern=on 3pt off 3pt] (1.44,-1.21)-- (5.56,0.35);
\draw [line width=0.4pt,dash pattern=on 3pt off 3pt] (1.78,-1.29)-- (5.56,0.35);
\draw [line width=0.4pt,dash pattern=on 3pt off 3pt] (1.78,-1.29)-- (5.74,-0.07);
\draw [line width=0.4pt,dash pattern=on 3pt off 3pt] (1.12,-1.39)-- (5.56,0.35);
\draw [line width=0.4pt,dash pattern=on 3pt off 3pt] (1.12,-1.39)-- (5.663944521931169,0.7640042238170786);
\draw [line width=0.4pt,dash pattern=on 3pt off 3pt] (1.18,-0.89)-- (5.74,-0.07);
\draw [line width=0.4pt,dash pattern=on 3pt off 3pt] (1.78,-1.29)-- (5.663944521931169,0.7640042238170786);
\draw [line width=0.4pt,dash pattern=on 3pt off 3pt] (5.56,0.35)-- (2.5342779411968177,2.255579945629563);
\draw [line width=0.4pt,dash pattern=on 3pt off 3pt] (5.663944521931169,0.7640042238170786)-- (2.5342779411968177,2.255579945629563);
\draw [line width=0.4pt,dash pattern=on 3pt off 3pt] (5.74,-0.07)-- (2.5342779411968177,2.255579945629563);
\draw [line width=0.4pt,dash pattern=on 3pt off 3pt] (1.18,-0.89)-- (2.5342779411968177,2.255579945629563);
\draw [line width=0.4pt,dash pattern=on 3pt off 3pt] (1.44,-1.21)-- (2.5342779411968177,2.255579945629563);
\draw [line width=0.4pt,dash pattern=on 3pt off 3pt] (1.78,-1.29)-- (2.5342779411968177,2.255579945629563);
\draw [line width=0.4pt,dash pattern=on 3pt off 3pt] (1.12,-1.39)-- (2.5342779411968177,2.255579945629563);
\begin{scriptsize}
\draw [fill=black] (1.44,-1.21) circle (1.5pt);
\draw[color=black] (2.26,-1.28) node {$v_3$};
\draw [fill=black] (1.18,-0.89) circle (1.5pt);
\draw[color=black] (0.64,-0.42) node {$v_4$};
\draw [fill=black] (1.12,-1.39) circle (1.5pt);
\draw[color=black] (0.74,-1.54) node {$v_1$};
\draw [fill=black] (1.78,-1.29) circle (1.5pt);
\draw[color=black] (1.54,-1.4) node {$v_2$};
\draw [fill=black] (5.663944521931169,0.7640042238170786) circle (1.5pt);
\draw[color=black] (5.88,1.18) node {$v_7$};
\draw [fill=black] (5.74,-0.07) circle (1.5pt);
\draw[color=black] (6.16,0.1) node {$v_5$};
\draw [fill=black] (5.56,0.35) circle (1.5pt);
\draw[color=black] (6.02,0.62) node {$v_6$};
\draw [fill=black] (2.5342779411968177,2.255579945629563) circle (1.5pt);
\draw [fill=black] (2.534277941196824,2.255579945629577) circle (2pt);
\end{scriptsize}
\end{tikzpicture}
  \caption{Construction of a rainbow 10-point set.}
\end{figure}
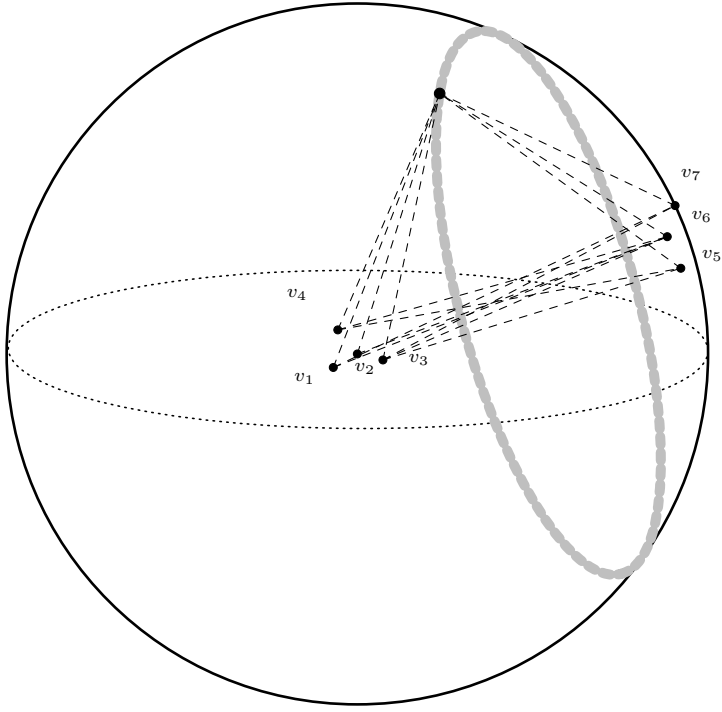

\paragraph{Step 1.} Finding of points $v_1$, $v_2$, $v_3$, $v_4$, 
which attached sphere has the radius closed to 1 and the great circle belonging to the slice. 

This requires the 3-dimensional subspace $U$ spanned by $v_1$, $v_2$, $v_3$, $v_4$, 
to be ``almost orthogonal'' to the main subspace $\mathbb{R}^3$, 
and the circumradius of the simplex $v_1v_2v_3v_4$ in $U$ to be small enough.

Consider the standard Cartesian coordinate system in slice $\mathbb{R}^3 \times [0,\eps]^6$: 
\[
v = (x_1,x_2,x_3, y_1, \dots , y_6), \quad  x_i \in \mathbb{R}, \quad y_i \in [0, \eps].
\]
For a given point $v=(x_1,x_2,x_3, y_1, \dots , y_6)$ define projections 
\[
p_R(v) = (x_1,x_2,x_3, 0, \dots , 0) \quad \mbox{ and } \quad p_\eps(v) = (0,0,0, y_1, \dots , y_6).
\]  
Consider sphere $S := S^5_{\eps_1}$ of the radius $\eps_1 < \eps/2$ centered at $(0,0,0, \eps/2, \eps/2, \dots, \eps/2)$; note that  $S \subset (0, 0, 0) \times [0,\eps]^6$. Let $T \subset \mathbb{R}^3$ be an arbitrary regular tetrahedron with the edge length $2 \sqrt{6}$ and the center at the origin and $u$ be an arbitrary point of sphere $S$. By Lemma~\ref{point_c4} every set $T \times \{ u\} \subset T \times S$ has a point with chromaticity at least 4.

Fix the parameters $\delta, h > 0$, which values will be chosen later.

Consider a set of points $U = \{u_1, \dots, u_m\} \subset S$ such that $\|u_i-u_j\| \geq \delta$, $i \neq j$ and $m$ is maximal. Obviously $m = \Omega(\delta^{-5})$. 
Match every point $u_i \in U$ with an arbitrary point $u^*_i \in T \times \{u_i\}$ with chromaticity at least $4$. 

Consider how $T$ is cut by a cubic mesh with edge length $h$:
\[
T_{i,j,k} := \bigsqcup\limits_{i,j,k} T \cap Z_{i,j,k}; \quad \quad
Z_{i,j,k} := [i h, (i+1)h ) \times [j h , (j+1)h) \times [k h , (k+1)h), 
\]
where $i,j,k$ are integers. Since $T \subset \mathbb{R}^3$ is bounded, one has
\[
\# \{(i,j,k): T \cap Z_{i,j,k} \neq \emptyset\}  = O (h^{-3}).
\]

Consider points $w_i = p_R(u^*_i) \in T$. Put $h = \delta^{3/2}$. There is a cell $T_{a,b,c}$ such that it contains at least 
\[
m = \frac{\Omega(\delta^{-5})}{O(\delta^{-9/2})} = \Omega \left (\delta^{-\frac{1}{2}} \right)
\]
points from $\{ w_i \}$. Note that $h = O(m^{-3})$, $\delta = O(m^{-2})$ and
\[
\diam T_{a,b,c} \leq \sqrt{3}h = \sqrt{3}\delta^{3/2} = O(m^{-3}).
\]
Now apply Lemma~\ref{kostul} for these $m$ points. It gives a triangle $\mathcal{T} = w_1w_2w_3$ such that its arbitrary small orthogonal shift, in particular the triangle $u_1^*u_2^*u_3^*$ has circumradius at most $(1/4 + O(m^{-2}))\varepsilon$ and its circumcircle $\omega$ belongs to the slice. Let us construct a (five-dimensional) sphere $S^*$ on $\omega$ as the diameter and choose $v_4$ as the most distant point from the plane $u_1^*u_2^*u_3^*$ on the sphere $S^*$. Then the simplex $u_1^*u_2^*u_3^*v_4$ is a non-degenerate simplex whose circumscribed sphere belongs to the interior of the slice.

It remains to choose in arbitrarily small neighborhoods of the points $u_1^*,u_2^*$ and $u_3^*$ the points $v_1,v_2$ and $v_3$, respectively, in such a way that the points $v_1,v_2,v_3$ and $v_4$ have pairwise different colors.

\paragraph{Step 2.} Finding in sphere $S(v_1,v_2,v_3,v_4; 1)$  points $v_5,v_6,v_7$ of different colors such that attached (2-dimensional) sphere $S(v_1, \dots, v_7;1)$ has a 2-equator belonging to the slice. Note that  $S(v_1, \dots, v_7,1)$ is the intersection of $S(v_1,v_2,v_3,v_4; 1)$ and $S(v_5,v_6,v_7; 1)$. 
A proper choice of $\varepsilon_1$, $h$ makes radii of the spheres and the distance between its centers close to $1$. 
Then the radius of  $S(v_1, \dots, v_7,1)$ tends to $\frac{\sqrt{3}}{2}>\frac{1}{2}$.

Suppose the intersection of an attached sphere with the slice 
\[
M := S(v_1,v_2,v_3,v_4;1) \bigcap \mathbb{R}^3 \times [0, \eps]^6 =  S^5_{1-\eta} \bigcap \mathbb{R}^3 \times [0, \eps]^6
\]
is properly colored and the equator $M_E = S^2_{1-\eta}$ belongs to the slice.

Let $H \subset \mathbb{R}^9$ be the 6-dimensional subspace containing $S^5_{1-\eta}$. Consider a coordinates in $H$ such that $M_E$ belongs to the subspace spanned by the first 3 coordinates. 
For every point $u \in M_E$, $u = (u_1,u_2,u_3,0,0,0)$ consider a sphere
\[
    S^2(u; \nu) = \{\sqrt{1-\nu^2} u + \xi \; | \quad  \xi=(0,0,0,\xi_4,\xi_5,\xi_6); \; \| \xi \| = \nu\}. 
\]
Note that $S^2(u; \nu)$ is a subset of $M$ when $\nu$ is small enough.

For every $u$ consider the following regular pentagon belonging to $S^2(u; \nu)$:
\[
    w_{u,k} = \left (u_1,u_2,u_3, \cos \frac{2\pi k}{5} \nu, \sin \frac{2\pi k}{5} \nu, 0 \right), \quad \quad k = 1, \dots, 5.  
\]

Let $u$ be a point. If one can find among $w_{u,1}, \dots, w_{u,5}$ points of three different colors, then they can be taken as $v_5$, $v_6$, $v_7$. 
Otherwise vertices of every pentagon are colored in at most 2 different colors, i.e. there is a color with at least three representatives. Call this color \emph{dominating} at $u$.

Consider an auxiliary coloring $\pi$ in which every point of $M_E$ has its dominating color. 
Let us show that $\pi$ is proper. Indeed if the distance between $p,q \in M_E$ is equal to 1, then $\|w_{p,k} - w_{q,k}\| = 1$ for every $k$, so by the pigeonhole principle dominating colors at $p$ and $q$ are different.

By Lemma~\ref{c3} sphere $M_E$ has a point $u^*$ with chromaticity at least 3, i.e. an arbitrary neighbourhood of $u^*$ has three points of different dominating colors. Then one may choose from corresponding pentagons 3 points of different colors in a way that chosen points lie in three small neighbourhoods of points $w_{u^*,1}, \dots, w_{u^*,5}$. 
Every triangle with vertices in these points is non-degenerate, and has sides of length at least $\nu$.

\paragraph{Step 3.} Recall that every point from $v_1$, $v_2$, $v_3$, $v_4$ and every point from $v_5$, $v_6$, $v_7$ lie at the 
distance 1 apart. Moreover, $v_1$, $v_2$, $v_3$, $v_4$ have pairwise different colors; the same holds for $v_5$, $v_6$, $v_7$.
Moreover, by Lemma~\ref{bydlo} (applied to equator that lies in the slice) the intersection of attached sphere $S(v_1, \dots, v_7; 1)$ and the slice has the chromatic number at least 3. Hence we show that a proper coloring of the slice requires at least 4+3+3=10 colors, as desired.

\section{Proof of Proposition~\ref{chiQ}}
\label{pr3}

Consider the following 4 points in $\mathbb{Q}^2 \times [0,\varepsilon]_\mathbb{Q}^2$:
\begin{gather}
    A = (0,\ 0,\ 0,\ 0),
    \\
    B = (q,\ \tfrac12,\ \alpha,\ \beta),
    \qquad
    C = (q,\ -\tfrac12,\ \alpha,\ \beta),
    \\
    D = (2q,\ 0,\ 0,\ 0).
\end{gather}
So we have
\begin{equation}\label{dist}
    |AB|^2 = |AC|^2 = |BD|^2 = |CD|^2 = q^2 + \frac14 + \alpha^2 + \beta^2.
\end{equation}
Our goal is to choose numbers $q \in \mathbb{Q}$ and $\alpha, \beta \in [0,\eps]_\mathbb{Q}$ such that expression~\eqref{dist} is equal to 1. Let $q = a/2b$, where $a$ and $b$ are some integers.
Then we need 
\begin{equation}
    \alpha^2 + \beta^2 = \frac34 - \frac {a^2}{(2b)^2} = \frac{3b^2 - a^2}{4b^2}.
\end{equation}

It is enough for $(a,b)$ to satisfy 
\begin{equation}\label{eq:main}
    3b^2 - a^2 = 2,
\end{equation}
so if $b$ is large enough, we can put $\alpha = \beta = \frac{1}{2b}$.

Let us construct a series of solutions to~\eqref{eq:main} as follows.
Given the solution $(a_n,b_n)$, we build next pair as
\begin{equation}\label{eq:recur}
    (a_{n+1},\ b_{n+1}) = (7a_n + 12b_n,\ 4a_n + 7b_n).
\end{equation}
One can check that $(a_{n+1},b_{n+1})$ is a solution to~\eqref{eq:main} by straightforward computation and use of assumption that so is $(a_n,b_n)$.
Now by taking $(a_0,b_0) = (1,1)$ we get an infinite sequence of solutions with $b_n$ strictly increasing without limit. So for any given $\eps$ there is some $n_\eps$ such that for $n>n_\eps$
\begin{equation}
    \frac{3b_n^2 - a_n^2}{4b_n^2} = \frac 1{2b_n^2} < 2\eps^2,
\end{equation}
which implies $1/2b < \eps$

Now we are going to find such integers $x$ and $y$ that 
\begin{equation}
    x\cdot \frac {a_n}{b_n} + y \cdot \frac {a_{n+1}}{b_{n+1}} = 1
\end{equation}
or
\begin{equation}
    x\cdot a_n b_{n+1} + 
    y\cdot a_{n+1} b_n = 
    b_n b_{n+1}.
\end{equation}
So existence of such $x$ and $y$ is equivalent to
\begin{equation}\label{eq:gcddiv}
    \gcd(
        a_n b_{n+1},\
        a_{n+1} b_n
    ) \mid b_n b_{n+1}.
\end{equation}
It is sufficient to show that $\gcd(\ldots) = 1$:
\begin{gather}
    \gcd(
        a_n b_{n+1},\
        a_{n+1} b_n
    ) \mid
    (a_n b_{n+1} - a_{n+1} b_n),\\
    a_n b_{n+1} - a_{n+1} b_n =
    a_n (4a_n + 7b_n) - b_n (7a_n + 12b_n) =
    4a_n^2 - 12b_n^2 = -4(3b_n^2 - a_n^2) = -8.
\end{gather}
And from~\eqref{eq:recur} it is clear that
\begin{gather}
    a_{n+1} \equiv a_n \equiv \ldots \equiv a_0 = 1 \pmod 2,\\
    b_{n+1} \equiv b_n \equiv \ldots \equiv b_0 = 1 \pmod 2.
\end{gather}
So  $\gcd(\dots )=1$ as required.

Finally, let $\chi(\mathbb{Q}^2\times [0,\eps]_\mathbb{Q}^2) = 3$. Then points $A$ and $D$ have the same color. Hence, each point at the distance $k\cdot  a_n/b_n + l\cdot a_{n+1}/b_{n+1}$ (where $1/2b_n^2 < 2\eps^2$ and $k,l$ are integers) from 0 has the same color. Taking $k = x$ and $l = y$, one can obtain that $(1,0,0,0)$ has the same color. A contradiction.

\begin{note}
    \def\Z{\mathbb Z}
    Recursion formula~\eqref{eq:recur} was obtained the following way. Consider a ring $\Z[\sqrt 3]$.
    It has the norm 
    \[
        N(a+b\sqrt 3) = (a+b\sqrt 3)(a-b \sqrt 3)= a^2-3b^2.
    \]
    Then~\eqref{eq:main} transforms to an equation $N(\alpha)=-2$.
    Norm is multiplicative:  $N(\alpha\beta)=N(\alpha)N(\beta)$ for any $\alpha,\beta \in \Z[\sqrt 3]$. So if $N(\alpha)=-2$ and $N(\zeta)=1$, then $N(\alpha\zeta) = -2$. For~\eqref{eq:recur} one can take $\zeta = 7+4\sqrt 3$.
\end{note}

\section{Further questions}
\label{diss}



\begin{problem}
Let $\mathcal{M}_n$ be a family of compact convex set  $\mathbb{R}^n$ such that a proper coloring of any $\mathbb{R}^n$ have a point of chromaticity at least $n+1$ in every $M \in \mathcal{M}_n$. Evaluate $V^*_n = \inf_{M \in \mathcal{M}_n} \operatorname{Vol} M$ from above. 
\end{problem}
Theorem~\ref{point_c4} gives the bound $V^*_n \leq  \frac{\sqrt{n+1}}{n!\sqrt{2^n}} \cdot \left ( \sqrt{2n(n+1)} \right)^n = \frac{\sqrt{n^n (n+1)^{n+1}}}{n!}$.

\bibliographystyle{plain}
\bibliography{main}

\paragraph{Danila Cherkashin:}  Institute of Mathematics and Informatics, Bulgarian Academy of Sciences
Sofia 1113, 8 Acad. G. Bonchev str.

Email address: jiocb.orlangyr@gmail.com

\paragraph{Alexei Kanel-Belov:}  Department of Discrete Mathematics, Moscow
institute of physics and technology, Dolgoprudny 141700, Russia.

Email address: kanelster@gmail.com

\paragraph{Georgii Strukov}
The Euler International Mathematical Institute, St. Petersburg, Russia

\paragraph{Vsevolod Voronov:} Caucasus Mathematical Center of Adyghe State University, Maikop 385000, Russia.

Email address: v-vor@yandex.ru

\end{document}